\documentclass[12pt]{article}
\usepackage{amsmath}
\usepackage{amsfonts}
\usepackage{graphicx,psfrag,epsf}
\usepackage{enumerate}
\usepackage{url} 
\usepackage{type1ec}
\usepackage{latexsym}
\usepackage{epsfig, ecltree, epic, eepic}
\usepackage{enumerate,amsmath,amssymb,dsfont,pifont,amsthm}
\usepackage[dvips]{color}
\usepackage{graphicx, float}
\usepackage[arrow, matrix, curve]{xy}

\usepackage{pstricks}
\usepackage{pst-grad} 
\usepackage{pst-plot} 

\newcommand{\blind}{0}

\addtolength{\oddsidemargin}{-.5in}%
\addtolength{\evensidemargin}{-.5in}%
\addtolength{\textwidth}{1in}%
\addtolength{\textheight}{1.3in}%
\addtolength{\topmargin}{-.8in}%

\theoremstyle{plain}
\newtheorem{theorem}{Theorem}[section]
\newtheorem{corollary}[theorem]{Corollary}
\newtheorem{lemma}[theorem]{Lemma}

\theoremstyle{definition}

\theoremstyle{remark}
\newtheorem{remark}[theorem]{Remark}
\newtheorem{example}[theorem]{Example}

\newcommand{\bbr}{\mathbb{R}}

\newcommand{\bbn}{\mathbb{N}}
\newcommand{\bbz}{\mathbb{Z}}
\newcommand{\cb}{\mathcal{B}}
\newcommand{\ca}{\mathcal{A}}
\newcommand{\cf}{\mathcal{F}}

\newcommand{\abs}[1]{\left| #1 \right|}
\newcommand{\norm}[1]{\left\| #1 \right\|}
\newcommand{\Cov}{\operatorname{Cov}}
\newcommand{\Var}{\operatorname{Var}}
\newcommand{\ord}{\operatorname{ord}}		

\begin{document}

\def\spacingset#1{\renewcommand{\baselinestretch}%
{#1}\small\normalsize} \spacingset{1}


\if0\blind
{
  \title{\bf Testing for Structural Breaks via Ordinal Pattern Dependence}
  \author{Alexander Schnurr\thanks{
    The authors gratefully acknowledge financial support of the DFG (German science Foundation) SFB 823: Statistical modeling of nonlinear dynamic processes (projects C3 and C5).}\hspace{.2cm}\\
    Fakult\"at f\"ur Mathematik, Technische Universit\"at Dortmund
    and \\
    Herold Dehling$^*$ \hspace{.2cm}\\
    Fakult\"at f\"ur Mathematik, Ruhr-Universit\"at Bochum
    }
  \maketitle
} \fi

\if1\blind
{
  \bigskip
  \bigskip
  \bigskip
  \begin{center}
    {\LARGE\bf Testing for Structural Breaks via Ordinal Pattern Dependence}
\end{center}
  \medskip
} \fi

\bigskip
\begin{abstract}
We propose new concepts in order to analyze and model the dependence structure between two time series. Our methods rely exclusively on the order structure of the data points. Hence, the methods are stable under monotone transformations of the time series and robust against small perturbations or measurement errors. Ordinal pattern dependence can be characterized by four parameters. We propose estimators for these parameters, and we calculate their asymptotic distributions. Furthermore, we derive a test for structural breaks within the dependence structure. All results are supplemented by simulation studies and empirical examples. 

For three consecutive data points attaining different values, there are six possibilities how their values can be ordered. These possibilities are called ordinal patterns. Our first idea is simply to count the number of coincidences of patterns in both time series, and to compare this with the expected number in the case of independence. If we detect a lot of coincident patterns, this means that the up-and-down behavior is similar. Hence, our concept can be seen as a way to measure non-linear `correlation'. We show in the last section, how to generalize the concept in order to capture various other kinds of dependence.
\end{abstract}

\noindent%
{\it Keywords:}  Time series, limit theorems, near epoch dependence, non-linear correlation.
\vfill

\newpage
\spacingset{1.45} 
\section{Introduction}
\label{sec:intro}

In Schnurr (2014) the concept of positive/negative ordinal pattern dependence has been introduced. In an empirical study he has found evidence that dependence of this kind appears in real-world financial data. In the present article, we provide consistent estimators for the key parameters in ordinal pattern dependence, and we derive their asymptotic distribution. Furthermore, we present a test for structural breaks in the dependence structure. The applicability of this test is emphasized both, by a simulation study as well as by a real world data example. Roughly speaking, positive (resp. negative) ordinal pattern dependence corresponds to a co-monotonic behavior (resp. an anti-monotonic behavior) of two time series. Sometimes an entirely different connection between time series might be given. By introducing certain distance functions on the space of ordinal patterns we get the flexibility to analyze various kinds of dependence. Within this more general framework we derive again limit theorems and a test for structural breaks.

Detecting changes in the dependence structure is an important issue in various areas of applications. Analyzing medical data, a change from a synchronous movement of two data sets to an asynchronous one might indicate a disease or e.g. a higher risk for a heart attack. In mathematical finance it is a typical strategy to diversify a portfolio in order to reduce the risk. This does only work, if the assets in the portfolio are not moving in the same direction all the time. Therefore, as soon as a strong co-movement is detected, it might be necessary to restructure the portfolio. 

From an abstract point of view, the objects under consideration are two discretely observed stochastic processes $(X_n)_{n\in\bbz}$ and $(Y_n)_{n\in\bbz}$. 
In order to keep the notation simple, we will always use $\bbz$ as index set. Increments are denoted by $(\Delta X_n)_{n\in\bbz}$, that is, $\Delta X_n:=X_n-X_{n-1}$. Furthermore, $h\in\bbn$ is the number of consecutive increments under consideration. 

The dependence is modeled and analyzed in terms of so called `ordinal patterns'. At first we extract the ordinal information of each time series. With $h+1$ consecutive data points $x_0, x_1,...x_h$ (or random variables) we associate a permutation in the following way: we order the values top-to-bottom and write down the indices describing that order. If $h$ was four and we got the data $(x_0,x_1,x_2,x_3,x_4)= (2,4,1,7,3.5)$, the highest value is obtained at 3, the second highest at 1 and so on. We obtain the vector $(3,1,4,0,2)$ which carries the full ordinal information of the data points. This vector of indices is called the \emph{ordinal pattern} of $(x_0, ..., x_h)$. A mathematical definition of this concept is postponed to the subsequent section. There, it also becomes clear how to deal with coincident values within $(x_0,...,x_h)$. The reflected vector $(-x_0,...,-x_h)$ yields the inverse pattern, that is, read the permutation from right to left. In the next step we compare the probability (in model classes) respectively the relative frequency (in real data) of coincident patterns between the two time series. If the (estimated) probability of coincident patterns is much higher than it would be under the hypothetical case of independence, we say that the two time series admit a positive ordinal pattern dependence. In the context of negative dependence we analyze the appearance respectively the probability of reflected patterns. The degree of this dependence might change over time: we see below that structural breaks of this kind show up in the dependence between the S\&P 500 and its corresponding volatility index. 

Ordinal patterns have been introduced in order to analyze large noisy data sets which appear in neuro-science, medicine and finance (cf. Bandt and Pompe (2002), Keller et al. (2007), Sinn et al. (2013)). In all of these articles only a single data set has been considered. To our knowledge the present paper is the first approach to derive the technical framework in order to use ordinal patterns in the context of dependence structures and their structural breaks. 

The advantages of the method include that the analysis is stable under monotone transformations of the state space. The ordinal structure is not destroyed by small perturbations of the data or by measurement errors. Furthermore, there are quick algorithms to analyze the relative frequencies of ordinal patterns in given data sets (cf. Keller et al. (2007), Section 1.4). Reducing the complexity and having efficient algorithms at hand are important advantages in the context of Big Data. Furthermore, let us emphasize that unlike other concepts which are based on classical correlation, we do not have to impose the existence of second moments. This allows us to consider a bigger variety of model classes. 

The minimum assumption in order to carry out our analysis is that the time series under consideration are \emph{ordinal pattern stationary (of order h)}, that is, the probability for each pattern remains the same over time. In the sections on limit theorems we will have to be slightly more restrictive and have to impose stationarity of the underlying time-series. Obviously stationarity of a time series implies stationary increments, which in turn implies ordinal pattern stationarity.

The paper is organized as follows: in Section 2 we present the rigorous definitions of the concepts under consideration. In particular we recall and extend the concept of ordinal pattern dependence. For the reader's convenience we have decided to derive the test for structural breaks first for this classical setting. In order to show the applicability of the proposed test we consider financial index data. It is then a relatively simple task to generalize our results to the more general framework which is described in Section 3. There, we consider the new concept of average weighted ordinal pattern dependence. Some technical proofs have been postponed to Section 4. In Section 5 we present a short conclusion. 

From the practical point of view, our main results are the tests on structural breaks (cf. Theorem 2.7 and its corollary) and the generalization of the concept of ordinal pattern dependence (Section 3). In the theoretical part the limit theorems for all parameters under consideration, in particular for $p$, are most remarkable (cf. Corollary 2.6). 

The notation we are using is mostly standard: 
vectors are column vectors and $'$ denotes a transposed vector or matrix. In defining new objects we write `:=' where the object to be defined stands on the left-hand side. We write $\bbr_+$ for $[0,\infty)$.

\section{Methodology}

First we fix some notations and the basic setup. Afterwards we present limit theorems for the parameters under consideration as well as our test on structural breaks. 

\subsection{Definitions and General Framework}

Let us begin with the formal definition of ordinal patterns: let $h\in\bbn$ and $\mathbf{x}=(x_0,x_1,...,x_h)\in\bbr^{h+1}$. The \emph{ordinal pattern} of $\mathbf{x}$ is the unique permutation $\Pi(\mathbf{x})=(r_0,r_1,...,r_h)\in S_{h+1}$ such that \\
\hspace*{2mm} (i) $x_{r_0} \geq x_{r_1} \geq ... \geq x_{r_h}$ and \\
\hspace*{2mm} (ii) $r_{j-1} > r_j$ if $x_{r_{j-1}} =x_{r_j}$ for $j\in\{1,...,h\}$.\\
For an element $\pi\in S_{h+1}$, $m(\pi)$ is the \emph{reflected permutation}, that is, read the permutation from right to left.

Let us now introduce the main quantities under consideration:\\
\hspace*{5mm}$p:=P \Big(\Pi(X_n,X_{n+1},...,X_{n+h})=\Pi(Y_n,Y_{n+1},...,Y_{n+h})\Big)$\\
\hspace*{5mm}$q:=\sum_{\pi\in S_{h+1}} P\Big(\Pi(X_n,X_{n+1},...,X_{n+h})=\pi\Big) \cdot P\Big(\Pi(Y_n,Y_{n+1},...,Y_{n+h})=\pi\Big)$\\
\hspace*{5mm}$r:=P \Big(\Pi(X_n,X_{n+1},...,X_{n+h})=m\big(\Pi(Y_n,Y_{n+1},...,Y_{n+h})\big)\Big) $\\
\hspace*{5mm}$s:=\sum_{\pi\in S_{h+1}} P\Big(\Pi(X_n,X_{n+1},...,X_{n+h})=\pi\Big) \cdot P\Big(m(\Pi(Y_n,Y_{n+1},...,Y_{n+h}))=\pi\Big)$

The time series $X$ and $Y$ exhibit a positive ordinal pattern dependence (ord$\oplus$) of order $h\in\bbn$ and level $\alpha>0$ if
\[
p>\alpha +q
\]
and negative ordinal pattern dependence (ord$\ominus$) of order $h\in\bbn$ and level $\beta>0$ if
\[
r>\beta+s.
\]

Let us shortly comment on the intuition behind these concepts: we compare the probability of coincident (resp. reflected) patterns in the time series $\{p,r\}$ with the (hypothetical) case of independence $\{q,s\}$. In order to have a concept which is comparable to correlation and other notions which describe or measure dependence between time series, we introduce the following quantity
\begin{align} \label{normed}
\text{ord}(X,Y):= \left(\frac{p-q}{1-q}\right)^+ - \left(\frac{r-s}{1-s}\right)^+
\end{align}
which is called the \emph{standardized ordinal pattern coefficient}. It has the following advantages: we obtain values between -1 and 1, becoming -1 resp. 1 in appropriate cases: let $Y$ be a monotone transformation of $X$ where $X$ is a time series which admits at least two different patterns with positive probability. In this case
\[
  \text{ord}(X,Y)= \left(\frac{1-q}{1-q}\right)^+ - \left(\frac{0-s}{1-s}\right)^+ =1 \ (q,s<1).
\]
In general $q$ becomes 1, only if the time series $X$ and $Y$ both admit only one pattern $\pi$ with positive probability (which is then automatically 1). In this case we would set $\text{ord}(X,Y)=1$, since this situation corresponds to a perfect co-movement. A similar statement holds true for $s$ in the case of anti-monotonic behavior. 

Using the standardized coefficient, the interesting parameters are still $p$ and $r$. If the time series $X$ and $Y$ under consideration are stationary, $q$ and $s$ do not change over time also. Recall that we do not want to find structural breaks within one of the time series, but in their dependence structure. In the context of change-points respectively structural breaks within \emph{one} data set cf. Sinn et al. (2012).

\begin{remark} It is important to note that our method depends on the definition of ordinal patterns which is not unique in the literature. In each case permutations are used in order to describe the relative position of $h+1$ consecutive data points. Most of the time the definition which we have given above is used. In Sinn et al. (2012), however, time is inverted while Bandt and Shiha (2007) use an entirely different approach which they call `order patterns'. Using their definition, the reflected pattern is no longer derived by reading the original pattern $\sigma$ from the right to the left, but by subtracting: $(h+1,...,h+1)-\sigma$.  However, the quantities $p$ and $q$ are invariant under bijective transformations (that is: renaming) of the ordinal patterns. Therefore, our results remain valid whichever definition is used.
\end{remark}

Given the observations $(x_1,y_1),\ldots, (x_n,y_n)$, we want to estimate the parameters $p,q,r,s$, and to test for structural breaks in the level of ordinal pattern dependence. In the subsequent section, we will propose estimators and test statistics, and determine their asymptotic distribution, as $n$ tends to infinity. Readers who are only interested in the test for structural breaks and its applications might skip the next subsection.

\subsection{Asymptotic Distribution of the Estimators of $p$}

The natural estimator of the parameter $p$ is the sample analogue
\begin{equation}
\hat{p}_n=\frac{1}{n} \sum_{i=1}^{n-h} 1_{\{\Pi(X_i,\ldots,X_{i+h})=\Pi(Y_i,\ldots,Y_{i+h})   \}}.
\label{eq:p_hat}
\end{equation}

The asymptotic results in our paper require some assumptions regarding the dependence structure of the underlying process $(X_i,Y_i)_{i\in\bbz}$. Roughly speaking, our results hold if the process is `short range dependent'. Specifically, we will assume that $(X_i,Y_i)_{i\in \bbz}$ is a functional of an absolutely regular process.  This assumption is valid for many processes arising in probability theory, statistics and analysis; see e.g. Borovkova, Burton and Dehling (2001)  for a large class of examples. 

For the reader's convenience we recall the following concept: let $(\Omega,\cf,P)$ be a probability space.  Given two sub-$\sigma$-fields $\ca, \cb\subset \cf$, we define 
\[
 \beta(\ca,\cb)=\sup \sum_{i,j} |P(A_i\cap B_j)-P(A_i)P(B_j)|,
\]
where the $\sup$ is taken over all partitions $A_1,\ldots, A_I\in \ca$ of $\Omega$, and over all partitions $B_1,\ldots,B_J\in \cb$ of $\Omega$.
The stochastic process $(Z_i)_{i\in \bbz}$ is called \emph{absolutely regular} with coefficients $(\beta_m)_{m\geq 1}$, if 
\[
  \beta_m:= \sup_{n\in \bbz} \beta(\cf_{-\infty}^n,\cf_{n+m+1}^\infty) \rightarrow 0,
\]
as $m\rightarrow \infty$. Here $\cf_k^l$ denotes the $\sigma$-field generated by the random variables 
$Z_k,\ldots,Z_l$. 

Now we can state our main assumption. We will see below that it is very weak and that the class under consideration contains several interesting and relevant examples. 

Let $(X_i,Y_i)_{i\geq 1}$ be an $\bbr^2$-valued stationary process, and let $(Z_i)_{i\in \bbz}$ be a stationary process with values in some measurable space $S$.  We say that $(X_i,Y_i)_{i\geq 1}$ is a functional of the process $(Z_i)_{i\in \bbz}$, if there exists a measurable function $f:S^\bbz\rightarrow \bbr^2$ such that, for all $k\geq 1$,
\[
  (X_k,Y_k)=f((Z_{k+i})_{i\in \bbz}).
\]
We call $(X_i,Y_i)_{i\geq 1}$ a $1$-approximating functional with constants $(a_m)_{m\geq 1}$, if for any $m\geq 1$, there exists a function $f_m: S^{2m+1}\rightarrow \bbr^2$ such that (for every $i\in\bbz$)
\begin{equation}
 E\|(X_i,Y_i)-f_m(Z_{i-m},\ldots,Z_{i+m})\| \leq a_m.
\label{eq:1-appr}
\end{equation}
\label{def:far}

Note that, in the Econometrics literature $1$-approximating functionals are called $L_1$-near epoch dependent (NED).
The following examples show the richness of the class under consideration. Recall that every causal $ARMA(p,q)$ process can be written as an $MA(\infty)$ process (cf. Brockwell and Davis (1991) Example 3.2.3.).

\begin{example} \label{ex:oneapprox}
(i) Let $(X_i)_{i\geq 1}$ be an $MA(\infty)$ process, that is,
\[
  X_i=\sum_{j=0}^\infty \alpha_j Z_{i-j}
\]
where $(\alpha_j)_{j\geq 0}$ are real-valued coefficients with $\sum_{i=j}^\infty \alpha_j^2<\infty$, and where $(Z_i)_{i\in \bbz}$ is an i.i.d. process with mean zero and finite variance. $(X_i)_{i\geq 1}$ is a $1$-approximating functional with coefficients 
$a_m=\left(\sum_{j=m+1}^\infty \alpha_j^2  \right)^{1/2} $. Limit theorems for $MA(\infty)$ processes require that the sequence $(a_m)_{m\geq 0}$ decreases to zero sufficiently fast. The minimal requirement is usually that the coefficients $(\alpha_j)_{j\geq 0}$ are absolutely summable.  If this condition is violated, the process may exhibit long range dependence, which is e.g. characterized by non-normal limits and by a scaling different from the usual $\sqrt{n}$-scaling. Let us remark that ordinal pattern distributions in (a single) $ARMA$ time series have been investigated in Bandt and Shiha (2007) Section 6.
\\[1mm]
(ii) Consider the map $T:[0,1]\longrightarrow [0,1]$, defined by $T(\omega)=2\, \omega\mod 1$, i.e.,
\[
 T(\omega) = \left\{   \begin{array}{ll}
2\omega & \mbox{ if } 0\leq \omega\leq 1/2 \\
2\omega-1& \mbox{ if } 1/2< \omega \leq 1.
\end{array}
\right.
\]
This function is well known as the one-dimensional baker's map in the theory of dynamical systems.
Let $g:[0,1]\rightarrow \bbr$ be a Lipschitz-continuous function, and define the stochastic process
$(X_n)_{n\geq 0}$ by 
\[
  X_n(\omega)=g(T^n(\omega)).
\]
This process was studied by Kac (1946), who established the central limit theorem for partial sums 
$\sum_{i=1}^n X_i$, under the assumption that $g$ is a function of bounded variation. The time series $(X_n)_{n\geq 0}$ is a $1$-approximating functional of an i.i.d. process $(Z_j)_{j\in \bbz}$ with approximating constants $a_m={\|g\|_L}/{2^{m+1}}$ where $\norm{\cdot}_L$ denotes the Lipschitz norm.
\\[1mm]
(iii) The continued fraction expansion provides an example from analysis  that falls under the framework of the processes studied in this paper. It is well known that any $\omega\in (0,1]$ has a unique continued fraction expansion
\[
  \omega=\frac{1}{a_1+\frac{1}{a_2+\frac{1}{a_3 +\cdots}}},
\] 
where the coefficients $a_i$, $i\geq 1$, are non-negative integers. Since these coefficients are functions of $\omega$, we obtain a stochastic process $(Z_i)_{i\geq 1}$, defined on the probability space $\Omega=(0,1]$ by $Z_i(\omega)=a_i$. If we equip $(0,1]$ with the Gau\ss\ measure
\[
  \mu((0,x])=\frac{1}{\log 2} \log(1+x),
\]
the process $(Z_i)_{i\geq 1}$ becomes a stationary $\psi$-mixing process. We can then study the process of remainders 
\[
  X_n(\omega)=\frac{1}{Z_n(\omega)+\frac{1}{Z_{n+1}(\omega)+\frac{1}{Z_{n+2}(\omega) +\cdots}}}.
\]
The process $(X_n)_{n\geq 1}$ is a $1$-approximating functional of the process $(Z_i)_{i\geq 1}$, and thus the results of the present paper are applicable to this example.
\end{example}

\begin{remark} At first glance, it might be a bit surprising that examples from the theory of dynamical systems are treated in an article which deals with the order structure of data. In fact, there is a close relationship between these two mathematical subjects: 
using ordinal patterns in the analysis of time series is equivalent to dividing the state-space into a finite number of pieces and using only the information in which piece the state is contained at a certain time. This is known as symbolic dynamics in the theory of dynamical systems. Each of these pieces is assigned with a so called symbol. Hence, orbits of the dynamical system are turned into sequences of symbols (cf. Keller et al. (2007), Section 1.2). 
\end{remark}

Processes that are $1$-approximating functionals of an absolutely regular process satisfy practically all limit theorems of probability theory, provided the $1$-approximation coefficients $a_m$ and the absolute regularity coefficients $\beta_k$ decrease sufficiently fast. In our applications below, we are not so much interested in limit theorems for the $(X_i,Y_i)$-process itself, but in limit theorems for certain functions
$g((X_i,Y_i),\ldots,(X_{i+h},Y_{i+h}))$ of the data. We then have to show that these functions are $1$-approximating functionals, as well. We will now state this result for two functions that play a role in the context of the present research. A preliminary lemma, along with its proof, is postponed to Section 4. 

\begin{theorem}
Let $(X_i,Y_i)_{i\geq 1}$ be a stationary 1-approximating functional of the absolutely regular process $(Z_i)_{i\geq 1}$. Let $(\beta(k))_{k \geq 1}$ denote the  mixing coefficients of the process $(Z_i)_{i\geq 1}$, and let $(a_k)_{k\geq 1}$ denote the 1-approximation constants. Assume that
\begin{equation}
 \sum_{k=1}^\infty (\sqrt{a_k}+\beta(k)) <\infty.
\label{eq:rates}
\end{equation}
Furthermore, assume that the distribution functions of $X_i-X_1$, and of $Y_i-Y_1$, are both Lipschitz-continuous, for any $i\in \{1,\ldots,h+1\}$. 
Then, as $n\rightarrow \infty$,
\begin{equation}
 \sqrt{n} (\hat{p}_n-p) \stackrel{\mathcal{D}}{\longrightarrow}N(0,\sigma^2),
\label{eq:p_hat_ad}
\end{equation}
where the asymptotic variance is given by the series
\begin{eqnarray}
\sigma^2 &=& \Var(1_{\{ \Pi(X_1,\ldots,X_{h+1})=\Pi(Y_1,\ldots,Y_{h+1})  \}}) \label{eq:p_limvar}\\
 && \qquad+2\sum_{m=2}^\infty \Cov\left(1_{\{ \Pi(X_1,\ldots,X_{h+1})=\Pi(Y_1,\ldots,Y_{h+1}) \}},
 1_{\{ \Pi(X_m,\ldots,X_{m+h})=\Pi(Y_m,\ldots,Y_{m+h}) \}}\right),
\nonumber
\end{eqnarray}
\label{th:p_hat_ad}
\end{theorem}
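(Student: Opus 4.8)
The plan is to rewrite $\hat p_n$ as a normalized partial sum of a stationary sequence and then to invoke a central limit theorem for $1$-approximating functionals of absolutely regular processes. First I would define the (bounded, measurable) function
\[
 g\big((x_0,y_0),\ldots,(x_h,y_h)\big):=1_{\{\Pi(x_0,\ldots,x_h)=\Pi(y_0,\ldots,y_h)\}},
\]
and set $U_i:=g\big((X_i,Y_i),\ldots,(X_{i+h},Y_{i+h})\big)$. By stationarity the $U_i$ are identically distributed with $EU_i=p$, and
\[
 \sqrt{n}(\hat p_n-p)=\frac{1}{\sqrt n}\sum_{i=1}^{n-h}(U_i-p)-\frac{h\,p}{\sqrt n}.
\]
The last term vanishes, and replacing the summation limit $n-h$ by $n$ alters the sum by at most $h$ bounded terms, hence by $o(1)$ after division by $\sqrt n$. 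It therefore suffices to establish a CLT for $\tfrac{1}{\sqrt n}\sum_{i=1}^{n}(U_i-p)$.

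The core step, which is the content of the preliminary lemma deferred to Section~4, is that $(U_i)_{i\geq1}$ is itself a stationary $1$-approximating functional of $(Z_i)_{i\in\bbz}$, with new constants of order $\sqrt{a_m}$. Since $(X_i,Y_i)$ is a $1$-approximating functional, there are functions $f_m$ with $E\norm{(X_{i+j},Y_{i+j})-W_{i,j}^{(m)}}\leq a_m$, where $W_{i,j}^{(m)}:=f_m(Z_{i+j-m},\ldots,Z_{i+j+m})$; setting $\tilde U_i^{(m)}:=g\big(W_{i,0}^{(m)},\ldots,W_{i,h}^{(m)}\big)$ yields a function of $Z_{i-m},\ldots,Z_{i+h+m}$, i.e. (after symmetrizing the window) a functional of radius $m+h$. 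The hard part will be that $g$ is discontinuous: the ordinal pattern of a block flips exactly when two of its entries cross, so $U_i\neq\tilde U_i^{(m)}$ is possible even for tiny approximation errors. The remedy is that the two patterns agree whenever (a) every coordinate is approximated to within $\epsilon$, and (b) no pairwise difference $X_{i+j}-X_{i+k}$ or $Y_{i+j}-Y_{i+k}$ lies within $2\epsilon$ of $0$ (ties themselves have probability zero by the Lipschitz hypothesis, so the pattern is a.s. determined by the strict pairwise comparisons). By Markov's inequality $P((a)^c)\leq (h+1)a_m/\epsilon$. For (b), stationarity identifies the law of $X_{i+j}-X_{i+k}$ with that of the increment $X_{1+\abs{j-k}}-X_1$, whose distribution function is Lipschitz by assumption, so each difference falls in an interval of length $4\epsilon$ with probability $O(\epsilon)$; summing over the finitely many pairs gives $P((b)^c)\leq C\epsilon$. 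Since $U_i,\tilde U_i^{(m)}\in\{0,1\}$, $E\abs{U_i-\tilde U_i^{(m)}}=P(U_i\neq\tilde U_i^{(m)})\leq (h+1)a_m/\epsilon+C\epsilon$, and optimizing over $\epsilon\sim\sqrt{a_m}$ gives the bound $C'\sqrt{a_m}$. This is precisely the mechanism that turns $a_m$ into $\sqrt{a_m}$ and explains why the rate condition \eqref{eq:rates} is imposed on $\sqrt{a_k}$ rather than on $a_k$.

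It then remains to apply the CLT for stationary $1$-approximating functionals of absolutely regular processes (Borovkova, Burton and Dehling (2001)). With approximation constants of order $\sqrt{a_m}$ (the harmless reindexing $m\mapsto m+h$ does not affect summability) and absolute-regularity coefficients $\beta(k)$, the hypothesis \eqref{eq:rates} is exactly the summability those results require. The associated covariance inequality for such functionals gives $\abs{\Cov(U_1,U_m)}\to 0$ summably, so $\sum_{m\geq2}\abs{\Cov(U_1,U_m)}<\infty$; hence the series defining $\sigma^2$ converges absolutely and equals the long-run variance $\lim_{n}\tfrac1n\Var\big(\sum_{i=1}^n U_i\big)$, which coincides with \eqref{eq:p_limvar}. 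The cited theorem then yields $\tfrac{1}{\sqrt n}\sum_{i=1}^n(U_i-p)\stackrel{\mathcal{D}}{\longrightarrow}N(0,\sigma^2)$, and by the reduction in the first paragraph this is \eqref{eq:p_hat_ad}. The one genuinely delicate point is the handling of the discontinuity above; the reduction and the variance identification are routine bookkeeping.
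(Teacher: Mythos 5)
Your proposal is correct and follows essentially the same route as the paper: the reduction to a partial-sum CLT for the indicator variables, the key lemma that these indicators are $1$-approximating functionals with constants $O(\sqrt{a_m})$ (proved by exactly your $\epsilon$-splitting into badly approximated coordinates plus near-ties, using Markov's inequality and the Lipschitz continuity of the increment distributions, then optimizing $\epsilon\sim\sqrt{a_m}$), and finally an appeal to a CLT for functionals of mixing processes. The only cosmetic difference is that the paper cites Theorem~18.6.3 of Ibragimov and Linnik (1971) rather than Borovkova, Burton and Dehling (2001) for the final CLT.
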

\begin{proof}
We apply Theorem~18.6.3 of Ibragimov and Linnik (1971) to the partial sums of the random variables
\[
 \xi_i:=  1_{\{\Pi(X_i,\ldots,X_{i+h})=\Pi(Y_i,\ldots,Y_{i+h})   \}}.
\]
By Lemma~\ref{le:1-approx}, we get that $\xi_i$ is a 1-approximating functional of the process 
$(Z_i)_{i\geq 1}$ with approximation constants $(\sqrt{a_k})_{k\geq 1}$. Thus, the conditions of Theorem~18.6.3 of Ibragimov and Linnik (1971) are satisfied, and hence \eqref{eq:p_hat_ad} holds.
\end{proof}

\begin{remark} Theorem~\ref{th:p_hat_ad} holds under the assumption that the underlying time series
$(X_i,Y_i)_{i\geq 1}$ is short range dependent. In the case of long-range dependent time series, other limit theorems hold, albeit with a normalization that is different from the standard $\sqrt{n}$-normalization.
\end{remark}

In order to determine asymptotic confidence intervals for $p$ using the above limit theorem, we need to estimate the limit variance $\sigma^2$. De Jong and Davidson (2000) have proposed a kernel estimator for the series on the r.h.s.\ of \eqref{eq:p_limvar}. Let $k:\bbr\rightarrow [0,1]$ be a symmetric kernel, i.e. $k(-x)=k(x)$, that is continuous in $0$ and safisfies $k(0)=1$, and let $(b_n)_{n\geq 1}$ be a bandwidth sequence tending to infinity. 
Then we define the estimator
\begin{equation}
\hat{\sigma}_n^2=\frac{1}{n}\sum_{i=1}^{n-h}\sum_{j=1}^{n-h}
 k\!\left(\! \frac{i-j}{b_n}  \right)\!\!
\left(1_{\{\Pi(X_i,\ldots,X_{i+h})=\Pi(Y_i,\ldots,Y_{i+h}) \}}\! -\!\hat{p}_n \right)\!\!
\left(1_{\{\Pi(X_j,\ldots,X_{j+h})=\Pi(Y_j,\ldots,Y_{j+h}) \}}\! -\!\hat{p}_n \right)\! .
\label{eq:var_est}
\end{equation}
De Jong and Davidson (2000) show that $\hat{\sigma}_n^2$ is a consistent estimator of $\sigma^2$, provided some technical conditions concerning the kernel function $k$, the bandwidth sequence
$(b_n)_{n\geq 1}$ and the process $(X_i,Y_i)_{i\geq 1}$ hold. The assumptions on the process follow from our assumptions. Concerning the kernel function and the bandwidth sequence, a possible choice is given by $k(x)=(1-|x|)1_{[-1,1]}(x)$ and $b_n=\log(n)$. We thus obtain the following corollary to Theorem~\ref{th:p_hat_ad}.

\begin{corollary}
Under the same assumptions as in Theorem~\ref{th:p_hat_ad}
\[
  \frac{\sqrt{n}(\hat{p}_n-p)}{\hat{\sigma}_n} \stackrel{\mathcal{D}}{\longrightarrow} N(0,1).
\]
As a consequence, $[\hat{p}_n-z_\alpha \hat{\sigma}_n,\hat{p}_n-z_\alpha \hat{\sigma}_n]$ is a confidence interval
with asymptotic coverage probability $(1-\alpha)$. Here $z_\alpha$ denotes the upper $\alpha$ quantile of the standard normal distribution.
\end{corollary}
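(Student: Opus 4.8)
The plan is to obtain the result as a direct consequence of Theorem~\ref{th:p_hat_ad} together with the consistency of the variance estimator $\hat{\sigma}_n^2$, combined through Slutsky's theorem. Theorem~\ref{th:p_hat_ad} already supplies the numerator limit, $\sqrt{n}(\hat{p}_n-p)\stackrel{\mathcal{D}}{\longrightarrow}N(0,\sigma^2)$, so the only genuinely new ingredient is to show $\hat{\sigma}_n^2\stackrel{P}{\longrightarrow}\sigma^2$ and then to renormalize.

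First I would establish the consistency $\hat{\sigma}_n^2\stackrel{P}{\longrightarrow}\sigma^2$ by invoking the kernel-estimator result of De Jong and Davidson (2000). To apply it one has to check that the summand process $\xi_i:=1_{\{\Pi(X_i,\ldots,X_{i+h})=\Pi(Y_i,\ldots,Y_{i+h})\}}$ fits their framework, and this is exactly where the assumptions of Theorem~\ref{th:p_hat_ad} enter. By Lemma~\ref{le:1-approx} the indicators $\xi_i$ form a bounded, stationary $1$-approximating functional of the absolutely regular process $(Z_i)_{i\geq 1}$ with approximation constants $(\sqrt{a_k})_{k\geq 1}$ (centering by a constant leaves this property intact), and the rate condition \eqref{eq:rates} guarantees that both $(\sqrt{a_k})_{k\geq 1}$ and $(\beta(k))_{k\geq 1}$ decay fast enough for their theorem. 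The prescribed kernel $k(x)=(1-|x|)1_{[-1,1]}(x)$ is symmetric, continuous at $0$ with $k(0)=1$, and the bandwidth $b_n=\log(n)\to\infty$ grows slowly enough to meet their bandwidth restrictions; the boundedness of $\xi_i$ makes all requisite moment conditions automatic. Consistency of $\hat{\sigma}_n^2$ then follows.

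Next, assuming $\sigma^2>0$ (otherwise the limiting law in Theorem~\ref{th:p_hat_ad} is degenerate and the statement is vacuous), the continuity of $x\mapsto\sqrt{x}$ at $\sigma^2$ and the continuous mapping theorem give $\hat{\sigma}_n\stackrel{P}{\longrightarrow}\sigma$, hence $\sigma/\hat{\sigma}_n\stackrel{P}{\longrightarrow}1$. Writing
\[
\frac{\sqrt{n}(\hat{p}_n-p)}{\hat{\sigma}_n}=\frac{\sqrt{n}(\hat{p}_n-p)}{\sigma}\cdot\frac{\sigma}{\hat{\sigma}_n},
\]
the first factor converges in distribution to $N(0,1)$ by Theorem~\ref{th:p_hat_ad} and the second converges in probability to $1$, so Slutsky's theorem yields $\sqrt{n}(\hat{p}_n-p)/\hat{\sigma}_n\stackrel{\mathcal{D}}{\longrightarrow}N(0,1)$. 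The confidence interval statement follows by inverting this pivotal quantity: since the limit is standard normal and hence has a continuous distribution function, the coverage probability of the event $\{-z_\alpha\le \sqrt{n}(\hat{p}_n-p)/\hat{\sigma}_n\le z_\alpha\}$ converges to the corresponding normal probability, and rearranging for $p$ gives the asserted interval.

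The main obstacle is the verification of the De Jong and Davidson (2000) conditions for the specific indicator functional $\xi_i$, rather than the Slutsky step, which is routine. In particular one must confirm that the $1$-approximation property supplied by Lemma~\ref{le:1-approx} translates into the precise near-epoch-dependence and mixing-rate hypotheses under which their kernel estimator is consistent, and that the slowly growing bandwidth $b_n=\log n$ respects their rate constraints; the boundedness of the indicators is what makes these checks go through cleanly, and it is also what allows one to dispense with any additional moment assumptions.
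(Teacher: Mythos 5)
Your proposal is correct and follows essentially the same route as the paper, which gives no formal proof but simply cites De Jong and Davidson (2000) for the consistency of $\hat{\sigma}_n^2$ (noting the process assumptions are implied by those of Theorem~\ref{th:p_hat_ad} and that $k(x)=(1-|x|)1_{[-1,1]}(x)$, $b_n=\log n$ are admissible) and then implicitly combines this with the CLT via Slutsky's theorem. Your write-up merely makes explicit the steps the paper leaves to the reader, including the harmless caveat $\sigma^2>0$.
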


We complement this theoretical result with a simulation of two correlated standard normal $AR(1)$ time series, where the $AR$-parameter $\phi$ is 0.1. Furthermore we have set $h=2$, $p=0.6353$, $n=1000$, $k(x)$ and $b_n$ as above. We have simulated this 1000 times obtaining the following histogram and Q--Q plot. 

\begin{figure}[h]
\begin{minipage}[h]{0.47\textwidth}
\includegraphics[width=7cm, angle=0]{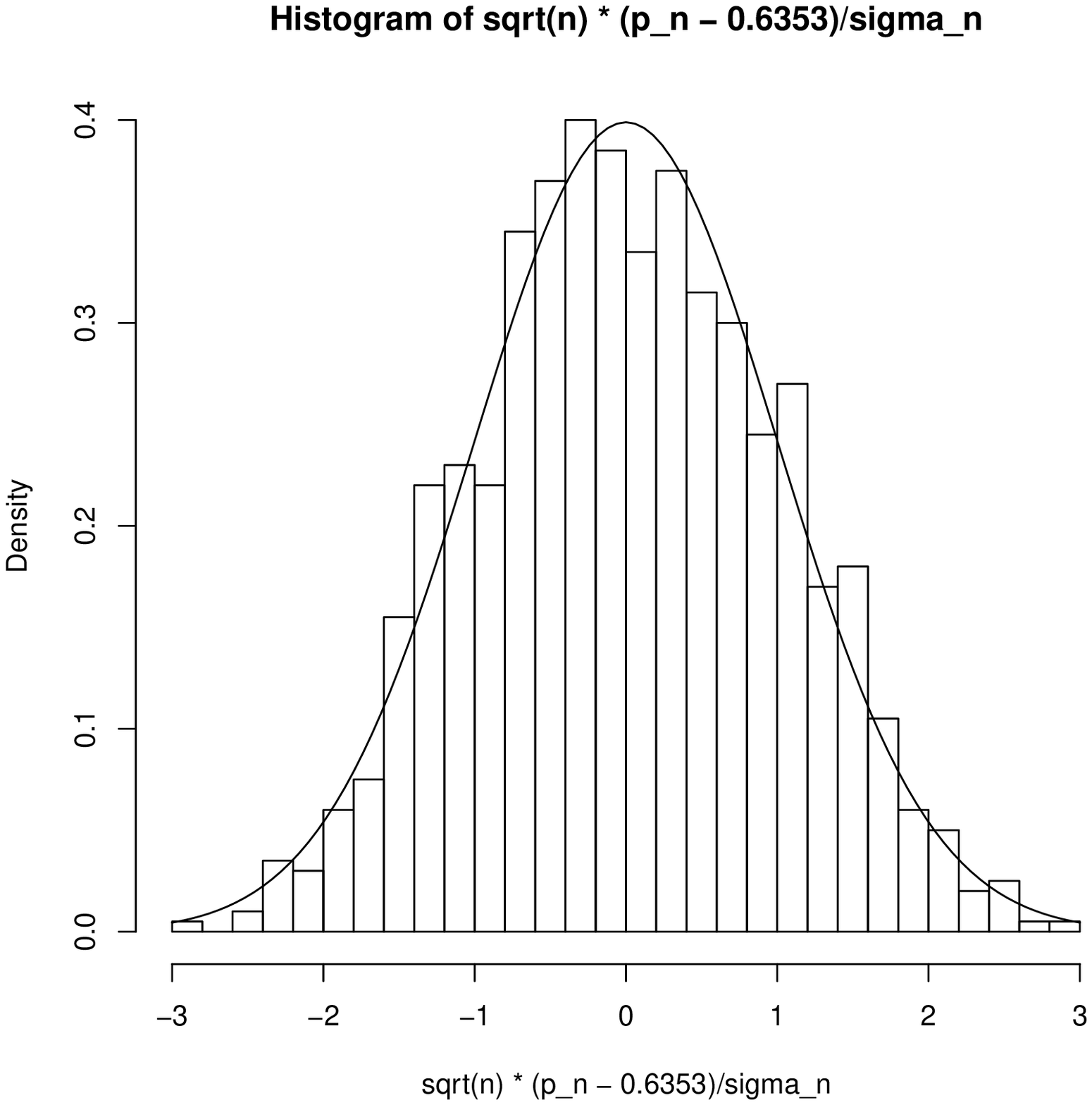}
\caption{Histogram of ${\sqrt{n}(\hat{p}_n-p)}/{\hat{\sigma}_n}$ for 1000 simulations of correlated $AR(1)$ time series and density of N(0,1) distribution.}
\end{minipage}
\hfill
\begin{minipage}[h]{0.47\textwidth}
\includegraphics[width=7cm, angle=0]{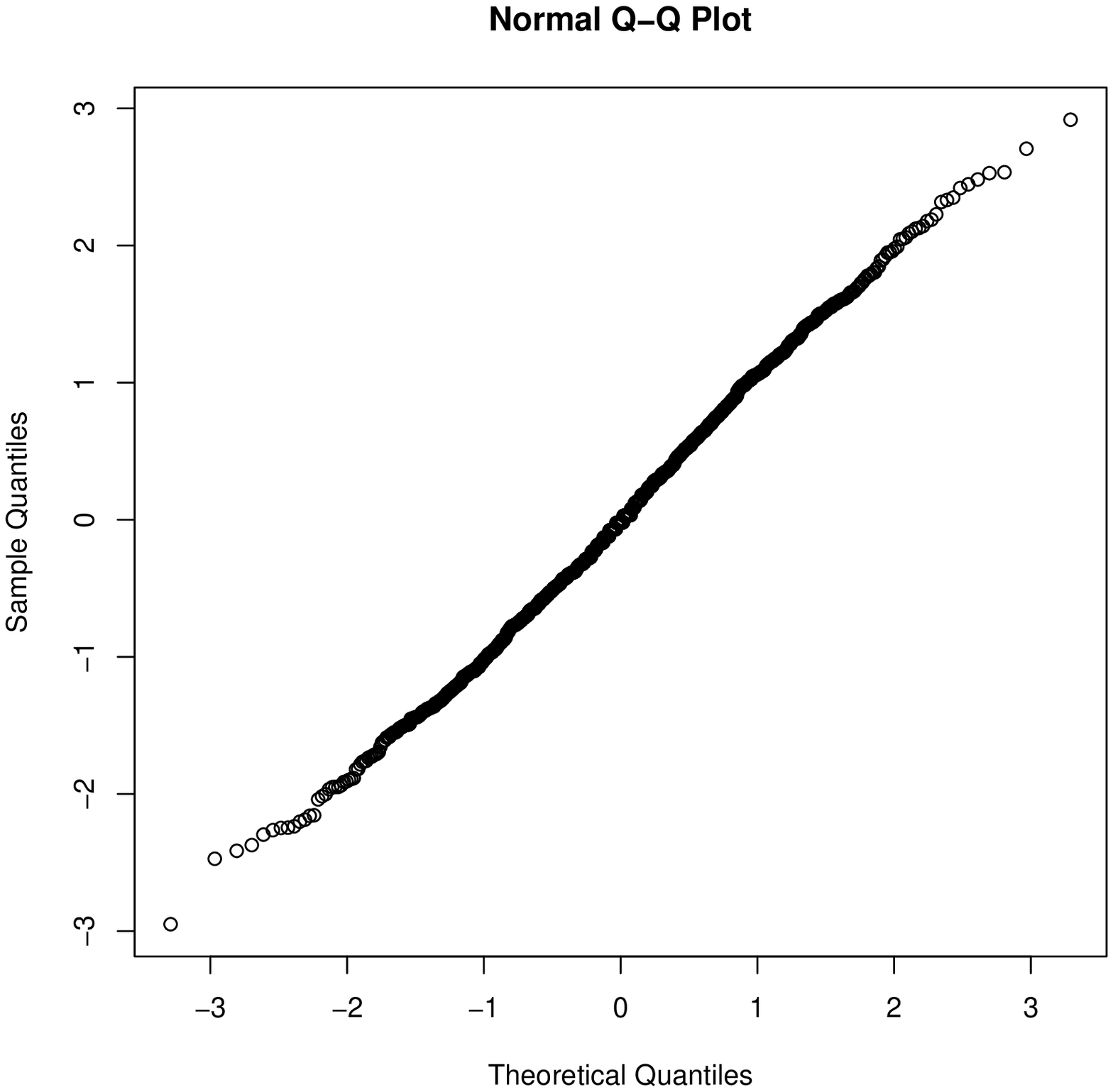}
\caption{Q--Q plot of ${\sqrt{n}(\hat{p}_n-p)}/{\hat{\sigma}_n}$ for 1000 simulations of correlated $AR(1)$ time series and N(0,1) distribution.}
\end{minipage}
\end{figure}

\subsection{Structural Breaks}


As we have pointed  out above the interesting parameter, in order to detect structural breaks in the dependence structure, is $p$. If $p$ changes significantly over time, $r$ has to change also. Furthermore, in order to analyze $r$ one can instead analyze $p$ for $X$ and $-Y$. For stationary time series, the values of $q$ and $s$ are stationary over time, too.

In order to test the hypothesis that there is no change in the ordinal pattern dependence, we propose the test statistic
\begin{equation} \label{T_n}
 T_n=\max_{0\leq k\leq n-h} \frac{1}{\sqrt{n}} \left|\sum_{i=1}^k \left(1_{\{\Pi(X_i,\ldots,X_{i+h})=
\Pi(Y_i,\ldots,Y_{i+h})  \}} -\hat{p}_n\right)\right|
\end{equation}
and prove limit theorems which are valid under the hypothesis. 
\begin{theorem} \label{th:cp_test_ad}
Under the same assumptions as in Theorem~\ref{th:p_hat_ad}, we have
\[
  T_n\stackrel{\mathcal{D}}{\longrightarrow} \sigma \sup_{0\leq \lambda \leq 1} 
  |W(\lambda)-\lambda W(1)|,
  \]
where $\sigma$ is defined in \eqref{eq:p_limvar}, and where $(W(\lambda))_{0\leq \lambda \leq 1}$ is standard Brownian motion. 
\end{theorem}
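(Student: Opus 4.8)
The plan is to recognize $T_n$ as a continuous functional of the centered and rescaled partial-sum process of the indicators
\[
 \xi_i:=1_{\{\Pi(X_i,\ldots,X_{i+h})=\Pi(Y_i,\ldots,Y_{i+h})\}},
\]
and to obtain the limit law from a functional central limit theorem (invariance principle) together with the continuous mapping theorem. By Lemma~\ref{le:1-approx}, which was already invoked in the proof of Theorem~\ref{th:p_hat_ad}, the sequence $(\xi_i)_{i\geq 1}$ is a stationary $1$-approximating functional of the absolutely regular process $(Z_i)_{i\geq 1}$ with approximation constants $(\sqrt{a_k})_{k\geq 1}$; under the rate condition \eqref{eq:rates} these constants are summable and the mixing coefficients decay fast enough. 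This is precisely the regime in which one expects not merely the one-dimensional limit theorem of Theorem~\ref{th:p_hat_ad}, but its functional counterpart.

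The central step I would carry out is the invariance principle
\[
 S_n(\lambda):=\frac{1}{\sqrt{n}}\sum_{i=1}^{\lfloor n\lambda\rfloor}(\xi_i-p)\ \stackrel{\mathcal{D}}{\longrightarrow}\ \sigma\,W(\lambda)\quad\text{in }D[0,1],
\]
where $\sigma^2$ is the series \eqref{eq:p_limvar} and $W$ is standard Brownian motion. This splits into two parts. Finite-dimensional convergence reduces, via a fixed partition of $[0,1]$ into blocks, to a multivariate central limit theorem for the vector of block increments of $S_n$; each block increment is a normalized partial sum of the same stationary $1$-approximating functional, and the asymptotic independence of increments over disjoint blocks is governed by the same approximation-and-mixing arguments that underlie Theorem~\ref{th:p_hat_ad}. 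Tightness of $(S_n)$ in $D[0,1]$ is the remaining ingredient; it follows from an increment moment inequality for partial sums of $1$-approximating functionals of absolutely regular sequences, which is available under \eqref{eq:rates} (cf.\ Borovkova, Burton and Dehling (2001)).

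Next I would pass from $S_n$ to the centered statistic. Writing $\hat{p}_n=\frac{1}{n}\sum_{i=1}^{n-h}\xi_i$ and adding and subtracting $p$, a short computation gives, uniformly in $0\leq k\leq n-h$,
\[
 \frac{1}{\sqrt{n}}\sum_{i=1}^{k}(\xi_i-\hat{p}_n)=S_n\!\left(\tfrac{k}{n}\right)-\frac{k}{n}\,S_n(1)+R_n,
\]
where the remainder $R_n$ collects a term of order $khp/n^{3/2}$ and the contribution of the last $h$ summands; since $h$ is fixed and $\abs{\xi_i-p}\leq 1$, both are $O(1/\sqrt{n})$ uniformly in $k$. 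Setting $k=\lfloor n\lambda\rfloor$ and using the joint convergence of $(S_n(\lambda),S_n(1))$ to $(\sigma W(\lambda),\sigma W(1))$ from the invariance principle, the right-hand side converges in $D[0,1]$ to $\sigma\bigl(W(\lambda)-\lambda W(1)\bigr)$, that is, to $\sigma$ times a standard Brownian bridge.

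Finally, $T_n$ in \eqref{T_n} is the image of this process under the map $x\mapsto\sup_{0\leq\lambda\leq 1}\abs{x(\lambda)}$, which is continuous on $D[0,1]$ in the Skorokhod topology (the supremum over $[0,1]$ being invariant under the time changes defining that topology). The continuous mapping theorem then yields
\[
 T_n\ \stackrel{\mathcal{D}}{\longrightarrow}\ \sigma\sup_{0\leq\lambda\leq 1}\abs{W(\lambda)-\lambda W(1)},
\]
as asserted. The main obstacle is upgrading the one-dimensional central limit theorem already granted by Theorem~\ref{th:p_hat_ad} to the full invariance principle: concretely, establishing the increment moment bound that yields tightness for $1$-approximating functionals under the summability condition \eqref{eq:rates}, and checking that the blocking arguments behind finite-dimensional convergence are compatible with that condition. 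Once the functional CLT is secured, the algebraic reduction to the Brownian bridge and the continuous mapping step are routine.
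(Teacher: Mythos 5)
Your proposal is correct and follows essentially the same route as the paper: decompose the centered partial sums into $S_n(\lambda)-\frac{[n\lambda]}{n}S_n(1)$ plus a negligible boundary term of the last $h$ summands, invoke the invariance principle for the partial sums of the $1$-approximating indicator functionals, and conclude by the continuous mapping theorem. The only difference is one of emphasis: the paper simply cites the invariance principle, while you sketch how it would be established (blocking for the finite-dimensional distributions, a moment inequality for tightness via Borovkova, Burton and Dehling (2001)), which is a reasonable elaboration rather than a different argument.
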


The proof is again postponed to Section 4. 

\begin{corollary}
Under the same assumptions as in Theorem~\ref{th:p_hat_ad}, we have
\[
  \frac{1}{\hat{\sigma}_n} T_n\stackrel{\mathcal{D}}{\longrightarrow} \sup_{0\leq \lambda \leq 1} 
  |W(\lambda)-\lambda W(1)|,
  \]
where $\hat{\sigma}_n$ is defined in \eqref{eq:var_est}, and where $(W(\lambda))_{0\leq \lambda \leq 1}$ is standard Brownian motion. 
\end{corollary}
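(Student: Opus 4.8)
The plan is to derive this statement from Theorem~\ref{th:cp_test_ad} together with the consistency of the kernel variance estimator $\hat{\sigma}_n^2$ defined in \eqref{eq:var_est}, by means of Slutsky's theorem. First I would collect the two ingredients. Theorem~\ref{th:cp_test_ad} already supplies
\[
 T_n \stackrel{\mathcal{D}}{\longrightarrow} \sigma \sup_{0\leq \lambda\leq 1}|W(\lambda)-\lambda W(1)|,
\]
with $\sigma$ as in \eqref{eq:p_limvar}. On the other hand, the result of De Jong and Davidson (2000), whose hypotheses are implied by the assumptions of Theorem~\ref{th:p_hat_ad} for the chosen kernel $k$ and bandwidth $b_n$, guarantees that $\hat{\sigma}_n^2$ is a consistent estimator of $\sigma^2$; hence $\hat{\sigma}_n \stackrel{P}{\longrightarrow} \sigma$ by the continuous mapping theorem.

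The combination is then immediate. Writing
\[
 \frac{1}{\hat{\sigma}_n}\, T_n = \frac{\sigma}{\hat{\sigma}_n}\cdot \frac{1}{\sigma}\, T_n,
\]
I would note that $\tfrac{1}{\sigma} T_n \stackrel{\mathcal{D}}{\longrightarrow} \sup_{0\leq \lambda\leq 1}|W(\lambda)-\lambda W(1)|$ and that $\sigma/\hat{\sigma}_n \stackrel{P}{\longrightarrow} 1$. Slutsky's theorem, applied to the product of a sequence converging in distribution and a sequence converging in probability to a constant, yields
\[
 \frac{1}{\hat{\sigma}_n}\, T_n \stackrel{\mathcal{D}}{\longrightarrow} \sup_{0\leq \lambda\leq 1}|W(\lambda)-\lambda W(1)|,
\]
which is the assertion.

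The only point requiring care, and hence the main obstacle, is the nondegeneracy condition $\sigma>0$, which is needed both to form the quotient $\sigma/\hat{\sigma}_n$ and to pass from consistency of $\hat{\sigma}_n^2$ to $\sigma/\hat{\sigma}_n \stackrel{P}{\longrightarrow} 1$. I would argue that $\sigma>0$ holds whenever the indicator $1_{\{\Pi(X_1,\ldots,X_{h+1})=\Pi(Y_1,\ldots,Y_{h+1})\}}$ is nondegenerate, i.e.\ $0<p<1$, which is precisely the situation relevant for a change-point test; the degenerate boundary cases can be excluded under the standing assumptions. Granting this, the entire argument is routine, the substantive work having already been carried out in Theorem~\ref{th:cp_test_ad} and in the cited consistency result.
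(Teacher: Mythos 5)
Your proposal is correct and matches the paper's (implicit) argument: the corollary follows from Theorem~\ref{th:cp_test_ad} together with the De Jong--Davidson consistency of $\hat{\sigma}_n^2$ discussed just before the analogous corollary for $\hat{p}_n$, combined via Slutsky's theorem. Your added remark on the nondegeneracy condition $\sigma>0$ is a sensible point of care that the paper leaves tacit, but it does not change the route.
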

\begin{remark}{\rm
Note that the distribution of $\sup_{0\leq \lambda \leq 1}   |W(\lambda)-\lambda W(1)|$ is the Kolmogorov distribution. If we denote the upper $\alpha$ quantile of the Kolmogorov distribution by $k_{\alpha}$, the test that rejects the hypothesis of no change when ${T_n}/{\hat{\sigma}_n}  \geq k_\alpha$ has level $\alpha$.
}
\end{remark}

\begin{example}

Again we complement the theoretical result with a simulation study. In both cases we have simulated two correlated $AR(1)$ time series, where the $AR$-parameter $\phi$ is 0.2. Furthermore we have set $h=2$, $n=1000$, $k(x)$ and $b_n$ as above. In the first time series (Figure 3), there is no structural break ($p=0.6353$). In the second time series (Figure 4) we have set $p=0.6353$ for the first 500 data points and $p=0.5378$ for the second 500 data points. We have simulated both pairs of time series 1000 times. Recall that the 0.95 quantile of the Kolmogorov distribution is 1.36.

\begin{figure}[h!]
\begin{minipage}[h!]{0.47\textwidth}
\includegraphics[width=7cm, angle=0]{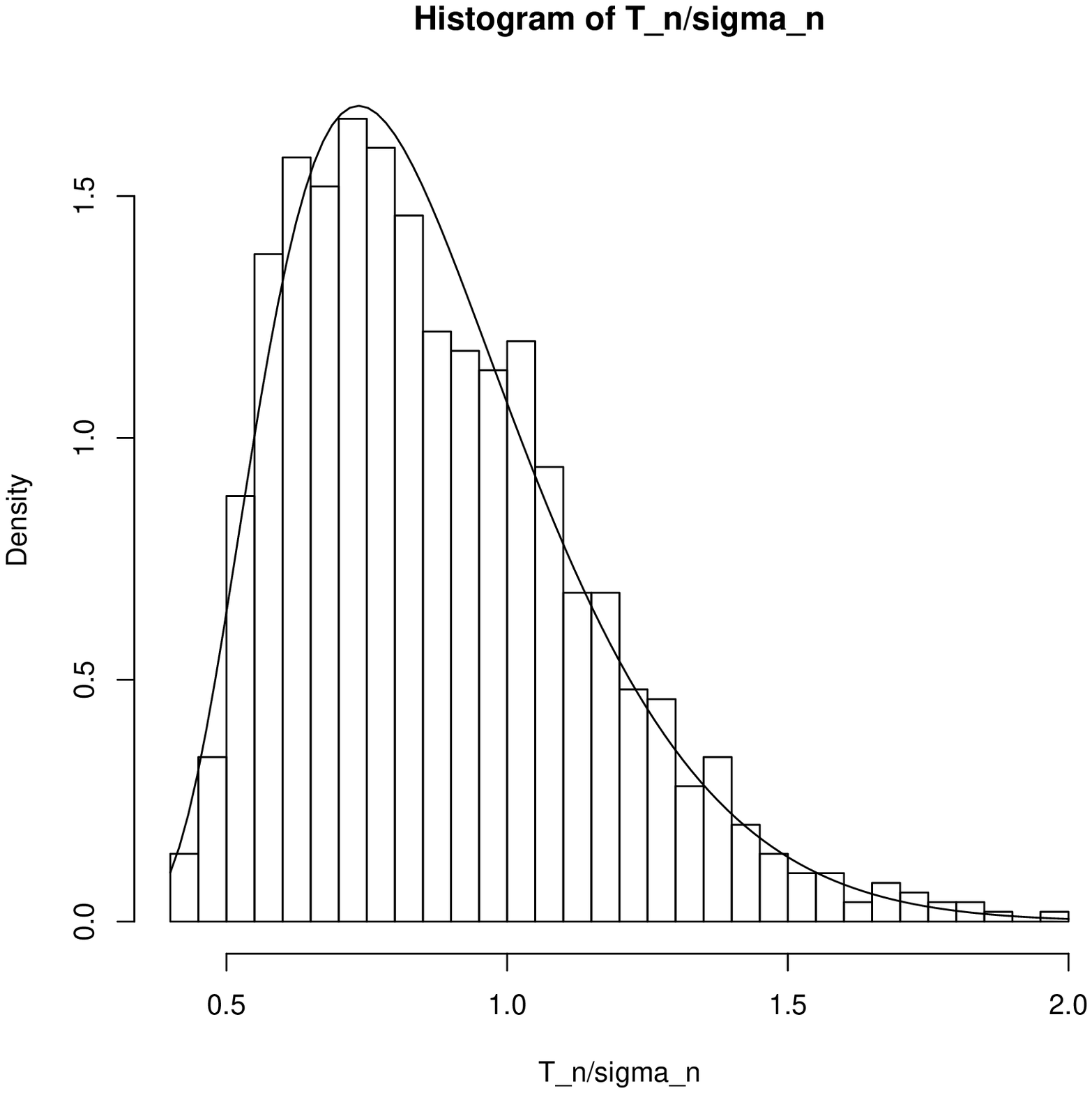}
\caption{Histogram of ${T_n}/{\hat{\sigma}_n}$ for 1000 simulations of correlated $AR(1)$ time series without structural break and Kolmogorov density.}
\end{minipage}
\hfill
\begin{minipage}[h!]{0.47\textwidth}
\includegraphics[width=7cm, angle=0]{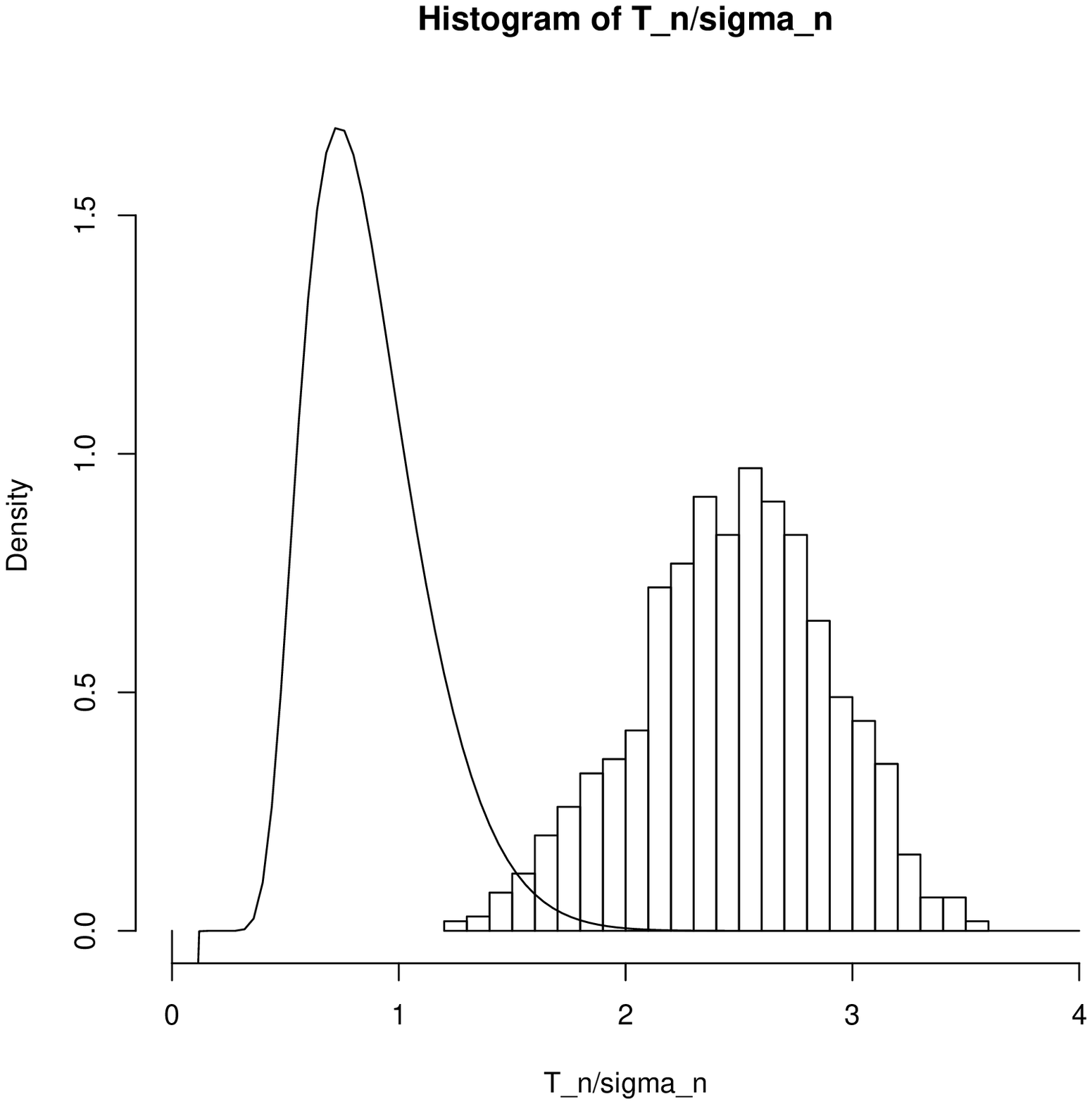}
\caption{Histogram of ${T_n}/{\hat{\sigma}_n}$ for 1000 simulations of correlated $AR(1)$ time series with a change of $p$ after 500 observations and Kolmogorov density}
\end{minipage}
\end{figure}
\end{example}

Let us, furthermore, analyze empirically the power of the test under different sizes of the change: we have used the same setting as above and obtain in Figure 5 the power of the test for various values of p after the break. 

\begin{figure}[H]
\centerline{\includegraphics[width=7cm, angle=0]{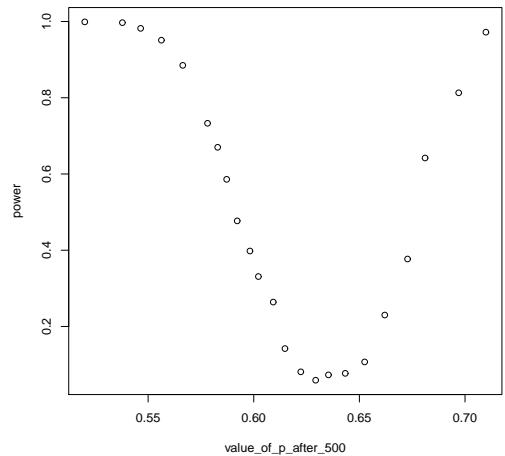}}
\caption{Empirical power of the test for structural breaks for different values of $p$ after 500 data points.}  
\end{figure}

We now work with simulated data sets which have been generated under three distinct settings: the normal distribution, the Student-t distribution with 2 degrees of freedom and the Cauchy distribution. For 500, 1000 and 2000 data points, we analyze structural breaks after 1/4, 1/3 respectively 1/2 of the data.   
For the resulting $AR(1)$-time series with $\phi=0.2$ as above we analyze changes in $p$ from $0.635$ to $0.437$.

\tiny
\vspace{5mm}
\centerline{
    \begin{tabular}{c c c c c c c c c c c c c c} 
     \hline \hline
				&        & \hspace*{-10mm}n=500     &      &&      &         & \hspace*{-10mm}n=1000&             &&     &  & \hspace*{-10mm}n=2000 &         \\ \cline{1-4} \cline{6-9} \cline{11-14}
		break & Normal & Student-t & Cauchy && break & Normal & Student-t & Cauchy && break & Normal & Student-t & Cauchy\\
     125  & 0.628 & 0.611  & 0.559 && 250 & 0.938  & 0.891 & 0.861 && 500 & 0.998 & 0.998  & 0.996 \\
		 167  & 0.776 & 0.769  & 0.71 && 333 & 0.979  & 0.973  & 0.958 && 667 & 1 & 0.999 & 1 \\
		 250  & 0.877 & 0.851  & 0.81 && 500 & 0.997  & 0.992 & 0.984 && 1000 & 1 & 1 & 1 
    \end{tabular}
    } 
\vspace{5mm}
\normalsize

Let us emphasize that in medical and financial data n=2000 is a reasonable number which is often obtained. It is surprising that we get strong results even in the highly irregular Cauchy setting.  

Finally, we use our method on real data.

\begin{example} \label{ex:vix} Let us consider the S\&P 500 and its corresponding volatility Index VIX. We cannot go into the details of the Chicago Board Options Exchange Volatility Index (VIX), but we give a short overview: the index was introduced in 1993 in order to measure the US-market's expectation of 30-day volatility which is implied by at-the-money S\&P 100 option prices. Since 2003, the VIX is calculated based on S\&P 500 data (we write SPX for short). The VIX is often qualified as the `fear index' in newspapers, TV shows and also in research papers. It has been discussed whether the VIX is a self-fulfilling prophecy or if it is a good predictor for the future anyway (cf. Whaley (2008), and the references given therein). For us the following facts are of importance:
\begin{itemize}
\item The VIX can be used to measure the market volatility at the moment it is calculated (instead of trying to predict the future).
\item Whether we use the S\&P 100 or the S\&P 500 data makes no difference, they are `for all intents and purposes (...) perfect substitutes' (Whaley (2008), p.3).
\item The relation between the two datasets (SPX$\leftrightarrow$VIX) is difficult to model (cf. Madan and Yor (2011)).
\item There is a negative relation between the datasets which is asymmetric and hence, in particular, not linear (cf. Whaley (2008), Section IV).
\end{itemize}
We have used open source data which we have extracted from finance.yahoo.com. We have analyzed the daily `close prices' for two periods of time each consisting in 2000 data points for $h=2$. In the time period from 1990-01-02 (the first day for which the VIX has been calculated) until 1997-11-25 we obtain $T_{2000}/\hat{\sigma}_{2000}= 0.843$. In the time period from 1997-11-26 to 2005-11-08 we get $T_{2000}/\hat{\sigma}_{2000}=1.5174$. Hence, our test suggests that there has been a structural break in the dependence between the two time series in this second time period (level $\alpha=0.05$). Recall that the so called dot-com bubble falls in the second time period. The effect gets weaker as $h$ increases. However, for $h=3$ resp. $h=4$ we still get significant results in case of the second time period, namely,  $T_{2000}/\hat{\sigma}_{2000}$ is 1.4898 resp. 1.3616.
Let us have a closer look on the values of
\[
 \frac{1}{\sqrt{n} \hat{\sigma}_n} \left|\sum_{i=1}^k \left(1_{\{\Pi(X_i,\ldots,X_{i+h})=
\Pi(Y_i,\ldots,Y_{i+h})  \}} -\hat{p}_n\right)\right|
\]
before the maximum in \eqref{T_n} is taken. The vertical line is the 0.95-quantile of the Kolmogorov distribution. 

\begin{figure}[H]
\begin{minipage}[h!]{0.47\textwidth}
\includegraphics[width=7cm, angle=0]{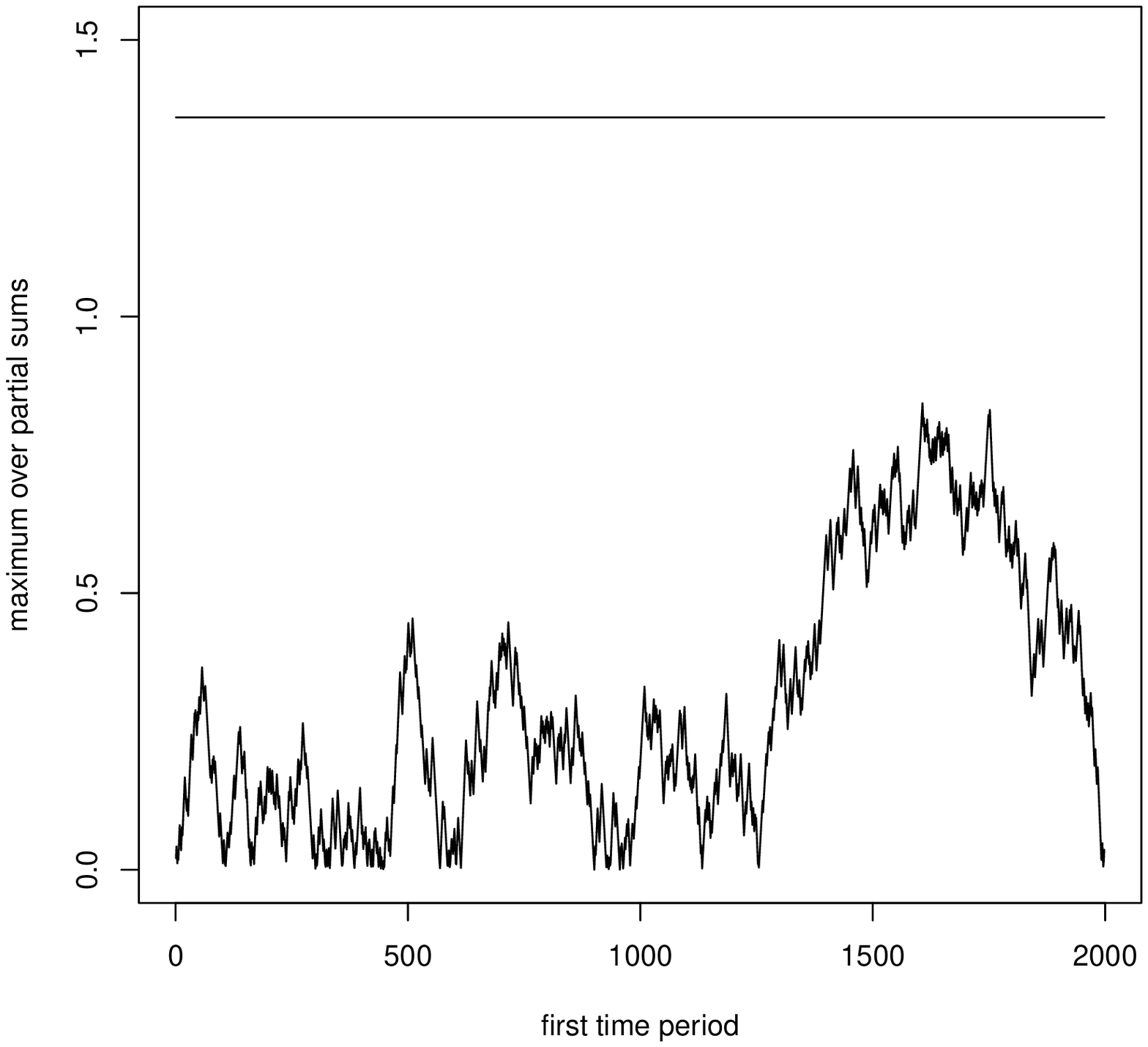}
\caption{No structural break is detected in the first 8 year period.}
\end{minipage}
\hfill
\begin{minipage}[h!]{0.47\textwidth}
\includegraphics[width=7cm, angle=0]{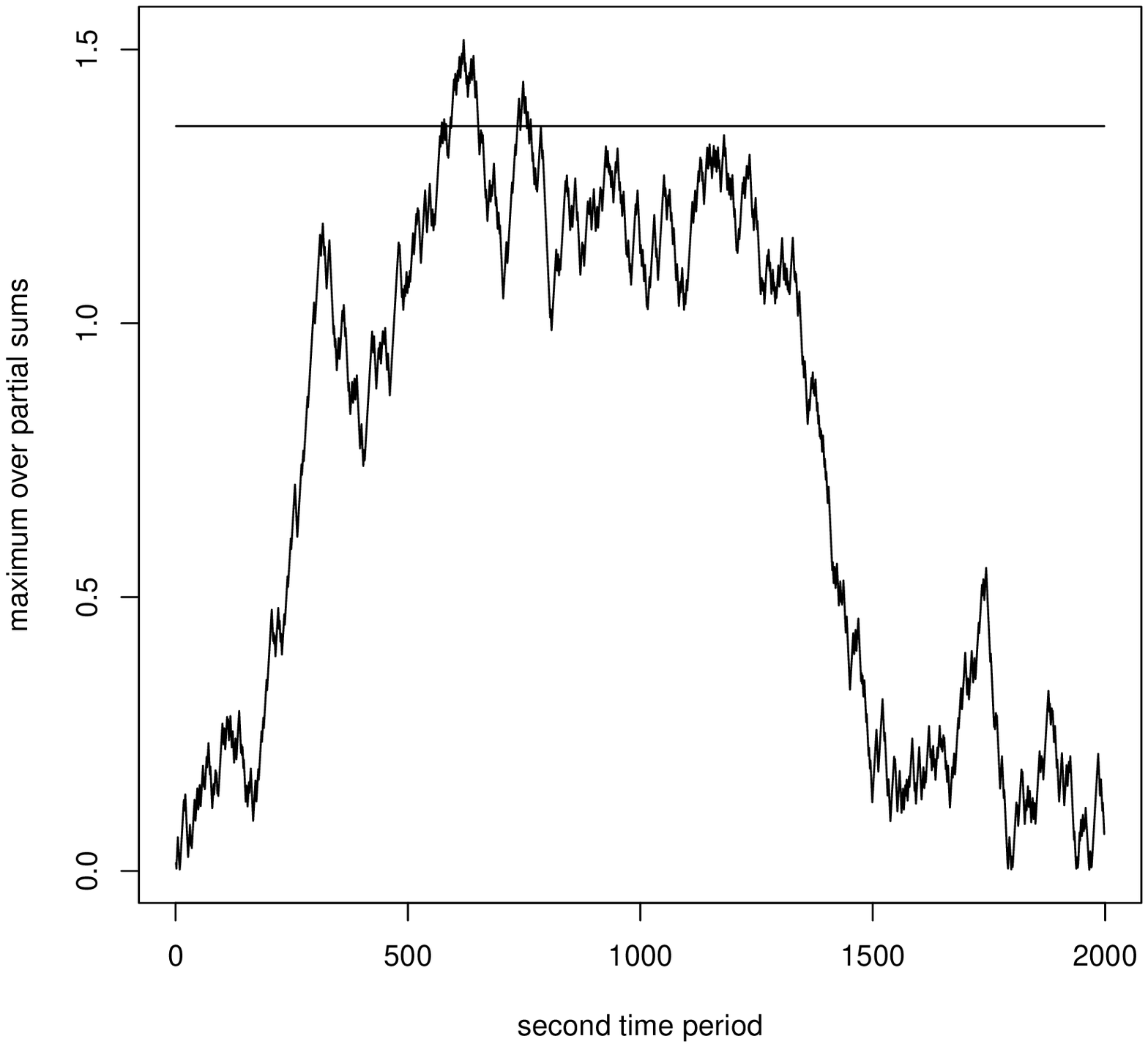}
\caption{In the second time period a structural break is detected.}
\end{minipage}
\end{figure}

\end{example}

\subsection{Estimating the Other Parameters}

Now we deal with the other parameters under consideration in order to estimate the standardized ordinal pattern coefficient. 

To estimate the parameter $q$, we define the following auxiliary parameters
\begin{eqnarray}
q_X(\pi) &=& P(\Pi(X_1,\ldots,X_{h})=\pi)\label{eq:q_x} \\
q_Y(\pi)&=&  P(\Pi(Y_1,\ldots,Y_{h})=\pi). \label{eq:q_y}
\end{eqnarray}
where $\pi\in S_{h+1}$ denotes a permutation.
Note that we have the following identity
\[
 q=\sum_{\pi\in S_h} q_X(\pi)\, q_Y(\pi).
\]
We estimate the parameters $q_X(\pi)$ and $q_Y(\pi)$ by their sample analogues
\begin{eqnarray}
\hat{q}_X(\pi)&=& \frac{1}{n}  \sum_{i=1}^{n-h} 1_{\{\pi(X_i,\ldots,X_{i+h})=\pi \}}\label{eq:q_x_hat} \\
\hat{q}_Y(\pi)&=& \frac{1}{n}  \sum_{i=1}^{n-h} 1_{\{\pi(Y_i,\ldots,Y_{i+h})=\pi \}}, \label{eq:q_y_hat} 
\end{eqnarray}
and finally $q$ by the plug-in estimator
\begin{equation}
 \hat{q}_n=\sum_{\pi\in S_h} \hat{q}_X(\pi)\, \hat{q}_Y(\pi).
\label{eq:q_n}
\end{equation}

\begin{theorem}
Under the same conditions as in Theorem~\ref{th:p_hat_ad}, the random vector
\[
  \sqrt{n} \left((\hat{q}_X(\pi) -q_X(\pi))_{\pi\in S_h},(\hat{q}_Y(\pi) -q_Y(\pi))_{\pi\in S_h}\right)
\]
converges in distribution to a multivariate normal distribution with mean vector zero and covariance matrix 
\[
\mathbf{\Sigma} =\left(  
\begin{array}{cc}
\mathbf{\Sigma}_{11} & \mathbf{\Sigma}_{12} \\
\mathbf{\Sigma}_{12} & \mathbf{\Sigma}_{22}
\end{array}
\right),
\] 
where the entries of the $(n!\times n!)$ block matrices  
$\mathbf{\Sigma}_{11} =\left(\sigma^{11}_{\pi,\pi^\prime}\right)_{\pi,\pi^\prime \in S_h}$ ,
$\mathbf{\Sigma}_{12} =\left(\sigma^{12}_{\pi,\pi^\prime}\right)_{\pi,\pi^\prime \in S_h}$, and 
 $\mathbf{\Sigma}_{22}=\left(\sigma^{22}_{\pi,\pi^\prime}\right)_{\pi,\pi^\prime \in S_h}$ are given by the following formulas 
\begin{eqnarray}
 \sigma_{\pi ,\pi^\prime}^{11} &=& \sum_{k=-\infty}^\infty
\Cov(1_{\{ \pi(X_1,\ldots,X_h)=\pi  \} },  1_{\{ \pi(X_{k+1},\ldots,X_{k+h})=\pi^\prime  \} } ) 
\label{eq:sig_11}\\
 \sigma_{\pi ,\pi^\prime}^{12} &=& \sum_{k=-\infty}^\infty
\Cov(1_{\{ \pi(X_1,\ldots,X_h)=\pi  \} },  1_{\{ \pi(Y_{k+1},\ldots,Y_{k+h})=\pi^\prime  \} } ) 
\label{eq:sig_12}\\
 \sigma_{\pi ,\pi^\prime}^{22} &=& \sum_{k=-\infty}^\infty
\Cov(1_{\{ \pi(Y_1,\ldots,Y_h)=\pi  \} },  1_{\{ \pi(Y_{k+1},\ldots,Y_{k+h})=\pi^\prime  \} } ).
\label{eq:sig_22}
\end{eqnarray}
\label{th:q_XY}
\end{theorem}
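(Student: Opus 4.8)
The plan is to reduce the multivariate statement to a family of univariate central limit theorems via the Cram\'er--Wold device, and then to invoke the same machinery already used for $\hat{p}_n$ in Theorem~\ref{th:p_hat_ad}. First I would fix arbitrary coefficients $(c_\pi)_{\pi\in S_h}$ and $(d_\pi)_{\pi\in S_h}$ and form the scalar array
\[
 \eta_i:=\sum_{\pi\in S_h} c_\pi\, 1_{\{\Pi(X_i,\ldots,X_{i+h})=\pi\}} +\sum_{\pi\in S_h} d_\pi\, 1_{\{\Pi(Y_i,\ldots,Y_{i+h})=\pi\}}.
\]
A short computation shows that the linear combination $\sqrt{n}\sum_\pi c_\pi(\hat{q}_X(\pi)-q_X(\pi))+\sqrt{n}\sum_\pi d_\pi(\hat{q}_Y(\pi)-q_Y(\pi))$ equals $n^{-1/2}\sum_{i=1}^{n-h}(\eta_i-\bbe\eta_i)$ up to a deterministic boundary term of order $n^{-1/2}$, which is asymptotically negligible. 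Thus it suffices to establish a univariate CLT for the partial sums of the centered array $(\eta_i-\bbe\eta_i)$.

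Second, I would verify that $(\eta_i)$ is a $1$-approximating functional of $(Z_i)$ with summable constants. Each indicator $1_{\{\Pi(X_i,\ldots,X_{i+h})=\pi\}}$ and $1_{\{\Pi(Y_i,\ldots,Y_{i+h})=\pi\}}$ depends only on finitely many consecutive coordinates of the $(X,Y)$-process and is of exactly the form treated in Lemma~\ref{le:1-approx}; hence each is a $1$-approximating functional with constants proportional to $\sqrt{a_k}$. Since a finite linear combination of $1$-approximating functionals is again $1$-approximating, with constants scaled by $\sum_\pi(\abs{c_\pi}+\abs{d_\pi})$, the same holds for $\eta_i$. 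With the rate assumption \eqref{eq:rates} in force, Theorem~18.6.3 of Ibragimov and Linnik (1971) then yields
\[
 \frac{1}{\sqrt{n}}\sum_{i=1}^{n}(\eta_i-\bbe\eta_i)\stackrel{\mathcal{D}}{\longrightarrow} N(0,\tau^2),\qquad \tau^2=\Var(\eta_1)+2\sum_{k=1}^\infty\Cov(\eta_1,\eta_{1+k}).
\]

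Third, I would expand $\tau^2$ bilinearly in the coefficients. By stationarity the combination of the variance term with the two one-sided covariance tails produces exactly
\[
 \tau^2=\sum_{\pi,\pi'} c_\pi c_{\pi'}\,\sigma^{11}_{\pi,\pi'} +2\sum_{\pi,\pi'} c_\pi d_{\pi'}\,\sigma^{12}_{\pi,\pi'} +\sum_{\pi,\pi'} d_\pi d_{\pi'}\,\sigma^{22}_{\pi,\pi'},
\]
with $\sigma^{11},\sigma^{12},\sigma^{22}$ the two-sided covariance series \eqref{eq:sig_11}--\eqref{eq:sig_22}; here the summability in \eqref{eq:rates} guarantees absolute convergence of these series, so $\tau^2$ is finite and equals the $(c,d)$ quadratic form associated with the claimed matrix $\mathbf{\Sigma}$. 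Since this holds for every choice of coefficients, the Cram\'er--Wold device gives joint convergence to $N(0,\mathbf{\Sigma})$, which is the assertion.

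I expect the single genuine obstacle to be the $1$-approximation step: one must show that the individual pattern indicators (not merely the coincidence indicator used for $\hat{p}_n$) inherit the $1$-approximating property, and this is precisely where the Lipschitz-continuity hypothesis on the distributions of the increments is used---it bounds the probability that the $L^1$-approximation error of the underlying data moves a near-tie across the boundary on which an ordinal pattern changes. Everything downstream---the boundary correction, the Cram\'er--Wold bookkeeping, and the identification of the limiting variance with the quadratic form of $\mathbf{\Sigma}$---is routine once Lemma~\ref{le:1-approx} is available.
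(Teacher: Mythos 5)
Your proposal is correct and follows exactly the route the paper takes: the paper's proof is a one-line appeal to the multivariate CLT for functionals of mixing processes, obtained from Theorem~18.6.3 of Ibragimov and Linnik (1971) via the Cram\'er--Wold device, with the $1$-approximation of the individual pattern indicators supplied by part (i) of Lemma~\ref{le:1-approx}. You have simply written out in full the steps the paper leaves implicit, and your identification of the Lipschitz hypothesis as the ingredient controlling near-ties matches the paper's use of that assumption in the lemma.
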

\begin{proof}
This follows from the multivariate  CLT for  functionals of mixing processes, which can be derived from Theorem~18.6.3 of Ibragimov and Linnik (1971) by using the Cram\'er-Wold device.
\end{proof}
\begin{remark}  
We have presented the formulas \eqref{eq:sig_11}--\eqref{eq:sig_22} for the asymptotic covariances for the case when the underlying process $(X_k,Y_k)_{k\in \bbz}$ is two-sided. In the case of a one-sided process
$(X_k,Y_k)_{k\geq 1}$, the formulas have to be adapted. E.g., in this case \eqref{eq:sig_11} becomes
\begin{eqnarray*}
\sigma_{\pi\pi^\prime}^{11}&=&
\Cov( 1_{\{ \pi(X_1,\ldots,X_h)=\pi  \} },  1_{\{ \pi(X_{1},\ldots,X_{h})=\pi^\prime  \} } ) \\
&&\qquad +\sum_{k=1}^\infty
\Cov(1_{\{ \pi(X_1,\ldots,X_h)=\pi  \} },  1_{\{ \pi(X_{k+1},\ldots,X_{k+h})=\pi^\prime  \} } ) \\
&& \qquad + \sum_{k=1}^\infty
\Cov(1_{\{ \pi(X_1,\ldots,X_h)=\pi^\prime  \} },  1_{\{ \pi(X_{k+1},\ldots,X_{k+h})=\pi \} } ) 
\end{eqnarray*}
\end{remark}

Using Theorem~\ref{th:q_XY} and the delta method, we can now derive the asymptotic distribution of the estimator $\hat{q}_n$, defined in \eqref{eq:q_n}. The proof can be found in Section 4. 

\begin{theorem} \label{th:q_hat}
Under the same assumptions as in Theorem~\ref{th:p_hat_ad}, 
\[
  \sqrt{n} (\hat{q}_n -q)  \rightarrow N(0,\gamma^2),
\] 
where the asymptotic variance $\gamma^2$ is given by the formula
\[
 \gamma^2 =\sum_{\pi,\pi^\prime \in S_h} q_Y(\pi) \sigma_{\pi,\pi^\prime}^{11} q_Y(\pi^\prime)
+2\sum_{\pi,\pi^\prime \in S_h} q_X(\pi) \sigma_{\pi,\pi^\prime}^{12} q_Y(\pi^\prime)
+\sum_{\pi,\pi^\prime \in S_h} q_X(\pi) \sigma_{\pi,\pi^\prime}^{22} q_X(\pi^\prime)
\]
\end{theorem}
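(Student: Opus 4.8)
The plan is to recognize $\hat{q}_n$ as a smooth function of the random vector whose joint asymptotic normality was established in Theorem~\ref{th:q_XY}, and then to invoke the multivariate delta method. Collect the estimators into the vector
\[
 \hat{V}_n=\left((\hat{q}_X(\pi))_{\pi\in S_h},(\hat{q}_Y(\pi))_{\pi\in S_h}\right)\in\bbr^{2|S_h|},
\]
with population counterpart $V=((q_X(\pi))_{\pi\in S_h},(q_Y(\pi))_{\pi\in S_h})$. Then $\hat{q}_n=g(\hat{V}_n)$ and $q=g(V)$ for the bilinear map $g(u,v)=\sum_{\pi\in S_h} u_\pi v_\pi$. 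Since $g$ is a polynomial it is everywhere continuously differentiable, so the hypotheses of the delta method are automatic once Theorem~\ref{th:q_XY} supplies $\sqrt{n}(\hat{V}_n-V)\stackrel{\mathcal{D}}{\longrightarrow}N(0,\mathbf{\Sigma})$.

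Next I would compute the gradient of $g$ at the true value $V$. The partial derivative with respect to the $X$-coordinate $u_\pi$ equals $v_\pi$, which at $V$ is $q_Y(\pi)$; the partial derivative with respect to the $Y$-coordinate $v_\pi$ equals $u_\pi$, which at $V$ is $q_X(\pi)$. Hence $\nabla g(V)=((q_Y(\pi))_{\pi\in S_h},(q_X(\pi))_{\pi\in S_h})$, and the delta method yields $\sqrt{n}(\hat{q}_n-q)\stackrel{\mathcal{D}}{\longrightarrow}N(0,\gamma^2)$ with $\gamma^2=\nabla g(V)^\prime\,\mathbf{\Sigma}\,\nabla g(V)$. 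Writing $a=(q_Y(\pi))_{\pi\in S_h}$ for the $X$-block of the gradient and $b=(q_X(\pi))_{\pi\in S_h}$ for the $Y$-block, and using the symmetry of $\mathbf{\Sigma}$, this quadratic form expands against the block decomposition as
\[
 \gamma^2=a^\prime\mathbf{\Sigma}_{11}a+2\,a^\prime\mathbf{\Sigma}_{12}b+b^\prime\mathbf{\Sigma}_{22}b,
\]
the factor $2$ arising because the two off-diagonal contributions $a^\prime\mathbf{\Sigma}_{12}b$ and $b^\prime\mathbf{\Sigma}_{12}a$ are equal scalars. Reading off the entries $\sigma^{11}_{\pi,\pi^\prime}$, $\sigma^{12}_{\pi,\pi^\prime}$, $\sigma^{22}_{\pi,\pi^\prime}$ from \eqref{eq:sig_11}--\eqref{eq:sig_22} reproduces the three sums in the statement.

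The delta method is the only real ingredient, so there is no deep obstacle; the points requiring care are mechanical rather than conceptual. The main one is keeping the index bookkeeping straight, since $\sigma^{12}_{\pi,\pi^\prime}$ pairs an $X$-pattern $\pi$ with a $Y$-pattern $\pi^\prime$, while the gradient pairs $q_Y$ with the $X$-block and $q_X$ with the $Y$-block; matching these against the middle sum of $\gamma^2$ is exactly where a sign or labeling slip could creep in. If one prefers not to quote the delta method as a black box, the same conclusion follows by the explicit linearization
\[
 \hat{q}_X(\pi)\hat{q}_Y(\pi)-q_X(\pi)q_Y(\pi)=q_Y(\pi)(\hat{q}_X(\pi)-q_X(\pi))+q_X(\pi)(\hat{q}_Y(\pi)-q_Y(\pi))+R_n(\pi),
\]
where the remainder $R_n(\pi)=(\hat{q}_X(\pi)-q_X(\pi))(\hat{q}_Y(\pi)-q_Y(\pi))$ is a product of two $O_P(n^{-1/2})$ terms and hence $O_P(n^{-1})$, which is negligible after multiplication by $\sqrt{n}$. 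Summing over $\pi\in S_h$ then exhibits $\sqrt{n}(\hat{q}_n-q)$ as a fixed linear functional of $\sqrt{n}(\hat{V}_n-V)$ plus a $o_P(1)$ term, and Theorem~\ref{th:q_XY} together with the continuous mapping theorem and Slutsky's lemma gives the stated limit with variance $\gamma^2$.
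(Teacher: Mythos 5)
Your proposal is correct and follows essentially the same route as the paper: the authors likewise view $\hat{q}_n$ as the bilinear map $f(\mathbf{x},\mathbf{y})=\sum_i x_i y_i$ applied to the vector of Theorem~\ref{th:q_XY}, compute $\nabla f(q_X,q_Y)=(q_Y,q_X)$, and apply the delta method to obtain $\gamma^2=q_Y^T\mathbf{\Sigma}_{11}q_Y+2q_X^T\mathbf{\Sigma}_{12}q_Y+q_X^T\mathbf{\Sigma}_{22}q_X$. The explicit linearization you sketch at the end is just the delta method unwound by hand and adds nothing that needs checking beyond what the paper already does.
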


If we want to apply the above limit theorems for hypothesis testing and the determination of confidence intervals, we need to estimate the limit covariance matrix $\mathbf{\Sigma}$. We will again apply the kernel estimate, proposed by De Jong and Davidson (2000), using the same kernel $k$ and the same bandwidth 
$(b_n)_{n\geq 1}$ as before. We define the $\bbr^{2(h+1)!}$-valued random vectors
\[
 \mathbf{V}_i=\left( \left(1_{\{ \pi(X_i,\ldots,X_{i+h})=\pi\} } -\hat{q}_X(\pi)\right)_{\pi\in S_{h+1}},
\left(1_{\{ \pi(Y_i,\ldots,Y_{i+h})=\pi\} } -\hat{q}_Y(\pi)\right)_{\pi\in S_{h+1}}
 \right)^T.
\]
The kernel estimator for the covariance matrix $\mathbf{\Sigma}$ is then given by
\[
\hat{\mathbf{\Sigma}}_n=\frac{1}{n}\sum_{i=1}^{n-h}\sum_{j=1}^{n-h} k\left(\frac{i-j}{b_n} \right) \mathbf{V}_i\, \mathbf{V}_j^T.
\]
We denote the entries of estimated covariance matrix $\hat{\mathbf{\Sigma}}_n$ by $\hat{\sigma}^{11}_{\pi,\pi^\prime}(n)$,$\hat{\sigma}^{12}_{\pi,\pi^\prime}(n)$, and  $\hat{\sigma}^{22}_{\pi,\pi^\prime}(n)$, respectively, where $\pi,\pi^\prime \in S_{h+1}$. Plugging the estimated covariances into the formula for $\gamma^2$, we then obtain a consistent estimator for $\gamma^2$.
\[
 \hat{\gamma}_n^2 =\! \sum_{\pi,\pi^\prime \in S_h}\! \! \hat{q}_Y(\pi) \hat{\sigma}_{\pi,\pi^\prime}^{11}(n) \hat{q}_Y(\pi^\prime)
+2\! \sum_{\pi,\pi^\prime \in S_h}\!\! \hat{q}_X(\pi) \hat{\sigma}_{\pi,\pi^\prime}^{12}(n) \hat{q}_Y(\pi^\prime)
+\!\sum_{\pi,\pi^\prime \in S_h}\! \!\hat{q}_X(\pi) \hat{\sigma}_{\pi,\pi^\prime}^{22}(n) \hat{q}_X(\pi^\prime).
\]

\begin{corollary}
Under the same assumptions as in Theorem~\ref{th:q_hat}
\[
  \frac{\sqrt{n}(\hat{q}_n-q)}{\hat{\gamma}_n} \stackrel{\mathcal{D}}{\longrightarrow} N(0,1).
\]
As a consequence, $[\hat{q}_n-z_\alpha \hat{\gamma}_n,\hat{q}_n-z_\alpha \hat{\gamma}_n]$ is a confidence interval
with asymptotic coverage probability $(1-\alpha)$. Here $z_\alpha$ denotes the upper $\alpha$ quantile of the standard normal distribution.
\end{corollary}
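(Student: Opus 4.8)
The plan is to obtain the result as a routine application of Slutsky's lemma, combining the central limit theorem of Theorem~\ref{th:q_hat} with the consistency of the variance estimator $\hat\gamma_n^2$. Theorem~\ref{th:q_hat} already delivers $\sqrt{n}(\hat q_n - q)\stackrel{\mathcal D}{\longrightarrow} N(0,\gamma^2)$, so (assuming the non-degenerate case $\gamma^2>0$, in which the normalization is meaningful) the only new ingredient needed is $\hat\gamma_n^2 \stackrel{P}{\longrightarrow}\gamma^2$. Once this is established, the continuous mapping theorem gives $\hat\gamma_n\stackrel{P}{\longrightarrow}\gamma>0$, and Slutsky's lemma applied to the ratio $\sqrt n(\hat q_n-q)/\hat\gamma_n$ yields the standard normal limit.

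To prove consistency of $\hat\gamma_n^2$ I would argue in three steps. First, observe that $\hat\gamma_n^2$ is a fixed polynomial—hence continuous—function of the marginal frequency estimators $(\hat q_X(\pi),\hat q_Y(\pi))_{\pi\in S_h}$ and of the entries $\hat\sigma^{11}_{\pi,\pi'}(n)$, $\hat\sigma^{12}_{\pi,\pi'}(n)$, $\hat\sigma^{22}_{\pi,\pi'}(n)$ of the kernel matrix $\hat{\mathbf\Sigma}_n$, while $\gamma^2$ is the same function of the corresponding population quantities $q_X(\pi),q_Y(\pi)$ and $\sigma^{ab}_{\pi,\pi'}$. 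Second, each $\hat q_X(\pi)$ and $\hat q_Y(\pi)$ is a sample mean of bounded indicator variables which, by the same reasoning as in Lemma~\ref{le:1-approx}, are $1$-approximating functionals of $(Z_i)$ whose constants inherit the summability \eqref{eq:rates}; the resulting law of large numbers (already implicit in the $\sqrt n$-rate of Theorem~\ref{th:q_XY}) gives $\hat q_X(\pi)\stackrel{P}{\longrightarrow}q_X(\pi)$ and likewise for $Y$. Third, $\hat{\mathbf\Sigma}_n$ consistently estimates $\mathbf\Sigma$ by the kernel-estimator results of De Jong and Davidson (2000), whose conditions on the kernel, bandwidth and process hold under our assumptions for the earlier choice $k(x)=(1-|x|)1_{[-1,1]}(x)$ and $b_n=\log n$. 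Combining these through the continuous mapping theorem gives $\hat\gamma_n^2\stackrel{P}{\longrightarrow}\gamma^2$.

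The main obstacle is the third step together with the fact that the centering in $\mathbf V_i$ uses the \emph{estimated} frequencies $\hat q_X(\pi),\hat q_Y(\pi)$ rather than the true ones: one must check that replacing the true centering by these $\sqrt n$-consistent estimates perturbs the long-run covariance estimator only by an $o_P(1)$ term after normalization, so that the consistency statement of De Jong and Davidson (2000) still applies coordinatewise. This reduces to verifying that each indicator process $1_{\{\Pi(X_i,\ldots,X_{i+h})=\pi\}}$ is a bounded $1$-approximating functional with approximation constants inheriting \eqref{eq:rates}, which is exactly the content needed to invoke their theorem. With $\hat\gamma_n^2\stackrel{P}{\longrightarrow}\gamma^2$ in hand, Slutsky's lemma produces $\sqrt n(\hat q_n-q)/\hat\gamma_n\stackrel{\mathcal D}{\longrightarrow} N(0,1)$, and the stated confidence interval follows by inverting this asymptotic pivot in the usual manner.
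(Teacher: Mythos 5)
Your proposal is correct and follows essentially the same route the paper intends: the paper gives no explicit proof of this corollary, but the surrounding text (constructing $\hat{\gamma}_n^2$ by plugging the De Jong--Davidson kernel estimates and the empirical frequencies into the formula for $\gamma^2$, and asserting its consistency) is precisely your Slutsky argument. Your additional care about the non-degeneracy $\gamma^2>0$ and about the estimated centering in $\mathbf{V}_i$ contributing only an $o_P(1)$ perturbation is a reasonable elaboration of details the paper leaves implicit, not a deviation from its approach.
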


\begin{remark}
(i) The coefficients $r,s$ can be estimated in the same way as $p,q$, by applying the estimators for $p$ and $q$ to the process $(X_i,-Y_i)_{i\geq 1}$. E.g., we can estimate $r$ by
\[
  \hat{r}_n=\frac{1}{n}\sum_{i=1}^{n-h} 1_{\{ \Pi(X_i,\ldots,X_{i+h})=\Pi(-Y_i,\ldots,-Y_{i+h} )  \} }
\]
(ii) The standardized ordinal pattern coefficient $\ord(X,Y)$ can be estimated by the plug-in estimator 
\[
  \hat\ord(X,Y)=\left(\frac{\hat{p}_n-\hat{q}_n}{1-\hat{p}_n}\right)^+
   -\left(\frac{\hat{r}_n-\hat{s}_n}{1-\hat{s}_n}\right)^+.
\]
If $p\neq q$ and $r\neq s$, we can establish asymptotic normality of $\hat\ord(X,Y)$ using the delta method. 
\end{remark}

\section{Weighted Ordinal Pattern Dependence}

As we have pointed out in the introduction, positive ordinal pattern dependence only counts the occurrence of coincident patterns. In the case of large values of $h$ it might happen that there is a strong co-movement of the two time series under consideration, which is distracted by a small noise. This might lead to `almost similar' patterns, a term which will be made precise below.

Besides, we might be interested in an entirely different dependence structure between two time series. We will see that our general approach yields a powerful tool to introduce and analyze various kinds of dependence. 

\subsection{Different Kinds of Dependence}

Let us begin with the following example: 

\begin{figure}[h]
\begin{minipage}[h]{0.47\textwidth}
\includegraphics[width=7cm, angle=0, trim= 0 4.5cm 0 4cm]{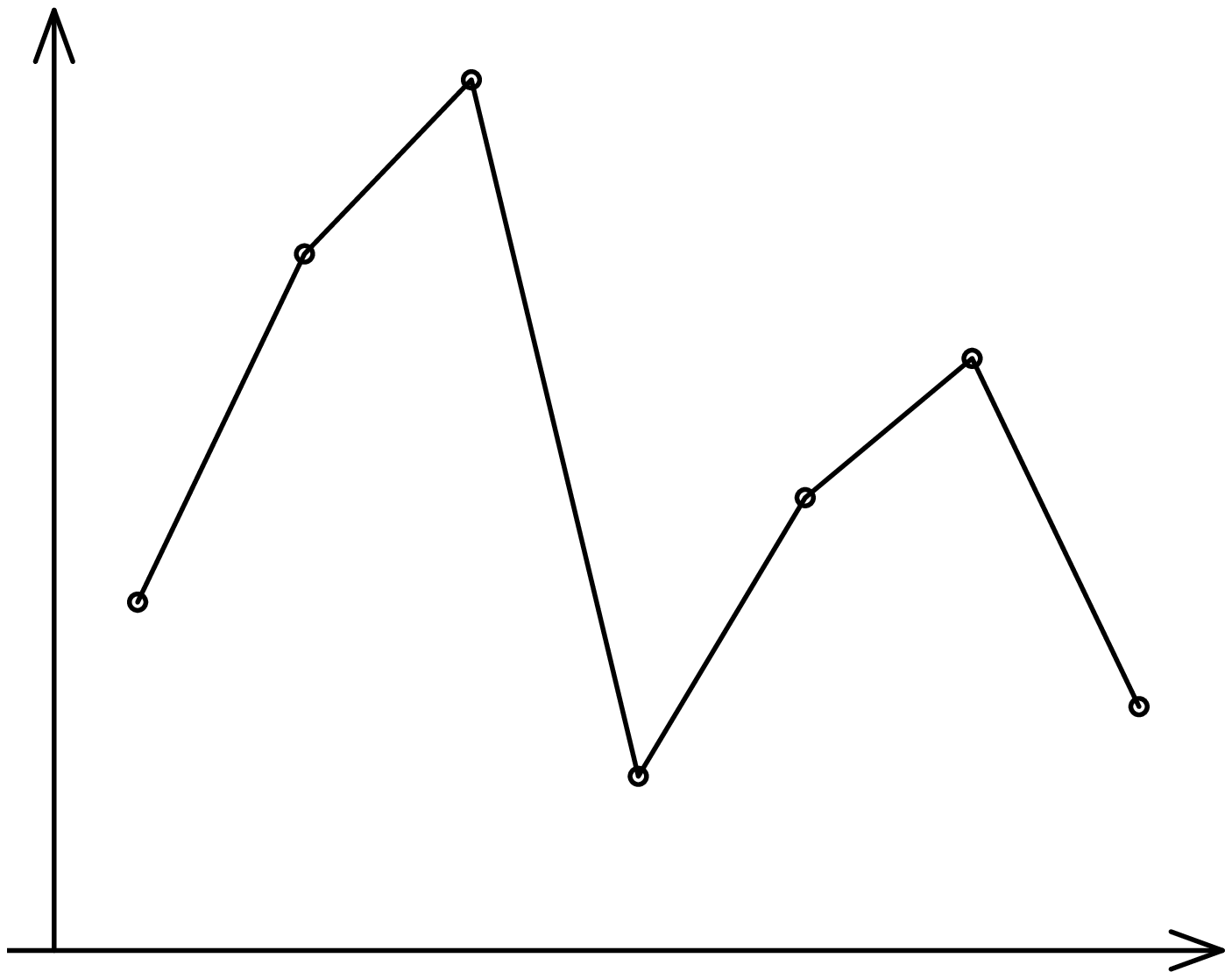}
\caption{The pattern $(2,1,5,4,0,6,3)$.}
\end{minipage}
\hfill
\begin{minipage}[h]{0.47\textwidth}
\includegraphics[width=7cm, angle=0, trim= 0 4.5cm 0 4cm]{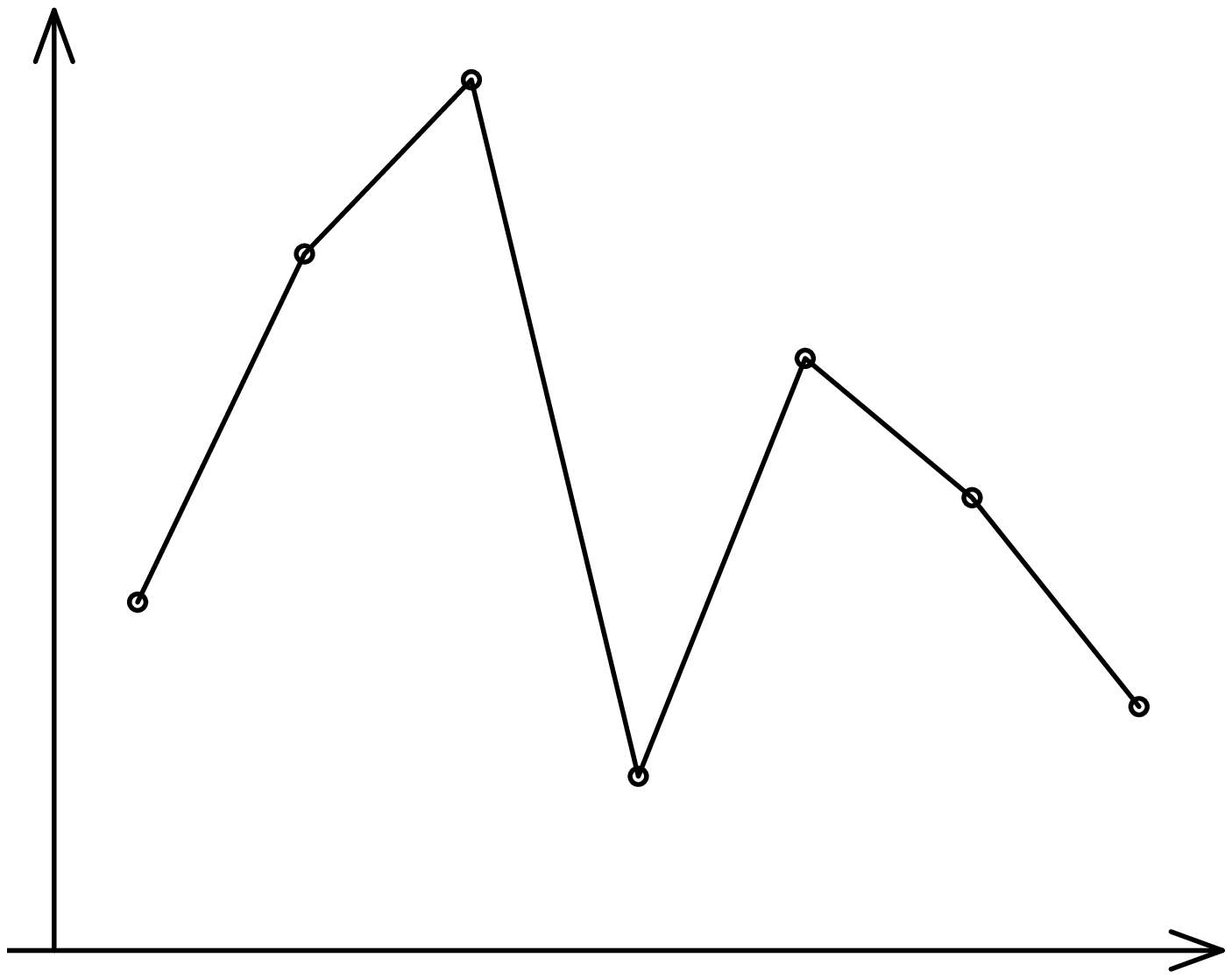}
\caption{The pattern $(2,1,4,5,0,6,3)$.}
\end{minipage}
\end{figure}

The difference between the corresponding permutations is only one neighboring transposition. In this sense the permutations are very close to each other. However, one has to be very careful with the meaning of `close to each other'. For $h=4$ the permutations $(1,3,2,0,4)$ and $(3,1,2,4,0)$ differ only by two transpositions. Nevertheless, they look almost like ord$\ominus$.

 
If the distance is chosen appropriately, (e.g. $\ell^1$-metric, see below)
a \emph{small} distance on $S_{h+1}$ can be interpreted as a \emph{strong} co-movement. Hence, we introduce a decreasing function $w$ on $d(S_{h+1},S_{h+1})$, the image of the metric. This function will be called \emph{weight function}. In the case described in Section 1 the distance function is the discrete metric and the weight function is $w=1_{\{0\}}$.

In the present section, we only consider positive dependence. The case of negative dependence can be treated in the same way, or by using positive dependence on $X$ and $-Y$.

Let us directly consider the most general setting. We will see below that it is sometimes useful to consider different types of patterns to be close to each other. Hence, we allow any metric on $S_{h+1}$, and even every pseudo-metric. The reduction of complexity is in general not as strong as in the `classical' setting as described in Section 1. The question is, how to compare the difference between the given data and the hypothetical case of independence. In the classical setting we have compared probabilities, but since it has been probabilities of Bernoulli random variables, we have compared expected values as well. This latter concept carries over to the more general case.

Let $d:S_{h+1}\times S_{h+1}\to \bbr_+$ be a pseudo-metric, that is, for every $\sigma,\pi,\rho\in S_{h+1}$ we have \\
\hspace*{2mm}(i) $d(\sigma,\sigma)=0$ \\
\hspace*{2mm}(ii) $d(\sigma, \pi)+d(\pi,\rho)\geq d(\sigma, \rho)$ \\
Furthermore, let $w: d(S_{h+1},S_{h+1})\to [0,1]$ be a monotonically decreasing function such that $w(0)=1$.
\begin{align} \begin{split}
E &w\Big(d\Big(\Pi(X_n,X_{n+1},...,X_{n+h}),\Pi(Y_n,Y_{n+1},...,Y_{n+h})\Big)\Big) \\
& -\sum_{\pi,\sigma\in S_{h+1}} w(d(\pi,\sigma))P\Big(\Pi(X_n,X_{n+1},...,X_{n+h})=\pi\Big) \cdot P\Big(\Pi(Y_n,Y_{n+1},...,Y_{n+h})=\sigma\Big)
\end{split}\end{align}
is called \emph{average weighted ordinal pattern dependence (AWOPD)}.  

The AWOPD can be normed as in \eqref{normed}. In applications it is sometimes more convenient to work without any norming: in the classical setting one compares the number of coincident patterns with the estimate of $q$ times the number of observed patterns, that is, the average number of coincident patterns one would expect under independence. Let $N$ be the number of observed points. In the new setting one compares the \emph{AWOPD-value}
\[
\sum_{j=1}^{N-h} w\Big(d\Big(\Pi(x_j,x_{j+1},...,x_{j+h}),\Pi(y_j,y_{j+1},...,y_{j+h})\Big)\Big)
\]
with the \emph{comparison value}, which is the (plug-in) estimate of
\[
\sum_{\pi,\sigma\in S_{h+1}} w(d(\pi,\sigma))P\Big(\Pi(X_n,X_{n+1},...,X_{n+h})=\pi\Big) \cdot P\Big(\Pi(Y_n,Y_{n+1},...,Y_{n+h})=\sigma\Big) 
\]
times $(N-h)$.

The first idea of this section has been to allow a small tolerance in comparing the ordinal structure of the two time series (cf. in particular Example \ref{ex:ellone} below). Another approach in this direction would be to use a kind of $\varepsilon$-band around the respective points of the time series. The problem here is in computation. We would lose the benefit which we get from analyzing only the ordinal structure, if we checked whether a point is in an $\varepsilon$-band around another point. And we would have to do this in fact for $h+1$ points simultaneously. 

There are various metrics, which can be defined on $S_{h+1}$ and which are used in different areas of mathematics. For a survey consult Deza and Huang (1998) and Critchlow (1985), Chapter II. The metric of choice depends highly on the application one has in mind. Here, we describe two different settings with suitable metrics.

\begin{example} \label{ex:ellone}
Maybe, the most natural choice for a metric is the $\ell^1$-distance: for $\pi,\sigma\in S_{h+1}$ define
\[
d_{\ell^1}(\sigma, \pi):= \sum_{j=0}^h \abs{\pi^{(j)}-\sigma^{(j)}}.
\]
Using this metric is in line with our interpretation from above. We are still interested (as in the first section) in the co-movement of time series, but we allow for a small tolerance. We do not have to have exactly the same pattern in both time series; it is enough if they are \emph{close} to each other. 

We emphasize the advantage of the new approach with a real world example:
let us consider again the relation between the S\&P 500 and the corresponding VIX (cf. Example \ref{ex:vix}).
Since we are dealing with a (generalized) ord$\ominus$, we analyze positive dependence between SPX and -VIX.

In Schnurr (2014) it was shown that there is a strong ord$\oplus$ between the two time series under consideration (up to the order $h=7$). Let us consider the time period from 06-12-1995 to 05-12-1997 ($n=500$) and fix the order $h=6$. For small orders the effect is not as strong. In the given data sets we find 15 coincident patterns. The classical comparison value is 0.7633. This means that if the two time series with their estimated pattern probabilities were independent, we would expect 0.7633 coincident patterns. In fact we find 15 of them. There is strong evidence for a classical positive ordinal pattern dependence. 

Next we use the $\ell^1$-metric and the weight function
\[
    w:= 1_{\{0\}}+0.75\cdot 1_{\{2\}}+0.50\cdot 1_{\{4\}}+0.25\cdot 1_{\{6\}}.
\]
Recall that the $\ell^1$-distance on $S_{h+1}$ is always an even number. We compare the AWOPD-value with the comparison value: the AWOPD-value is 101.5. Keep in mind that here not only the 15 coincident patterns are counted, but also various almost coincident patterns do count towards this score. The comparison value is in this case 13.5. The advantage of our new approach can be emphasized by analyzing a noisy version of the time series. We calculate the realized variance $V$ of the first time series (S\&P500) and add to it a white noise sequence of i.i.d. Gaussian random variables with mean zero and variance $V$, that is, the variance of the noise is as big as the variance of the data. We have simulated this 100 times. For 81 of the new noisy time series we found not a single coincident pattern between these time series and the original -VIX data. Using weighted ordinal pattern dependence with $d$ and $w$ as above we get a mean of 8.4125 for the AWOPD-value, which is significantly higher than the mean of the comparison value 3.983 (sd=0.4933). Even under a strong noise the AWOPD approach still detects the positive dependence. This example shows how the tolerance which we have included in the comparison makes our method more robust. 
\end{example}

While the metric in the example considered above is quite canonical, the (optimal) choice of the weight function is an interesting open question. With our linear function above we have been quite successful, although it was chosen ad hoc. 

On some occasions, it might not be co-movement we are interested in. In the following example, we use the metric in order to distinguish between `order' and `chaos'.

\begin{example}
Sometimes one might not be interested in co-monotonic or anti-monotonic behavior. Some time series, in particular in biology/medicine have times of regular behavior and others of, say, chaotic or turbulent behavior. It could be of interest to analyze whether the chaotic parts within two time series start and stop at about the same time. A structural break would then mean that one of the series starts its chaotic behavior while the other one is still in a regular regime or the other way around.   

First we have to answer the question how to measure `regular' behavior in terms of ordinal patterns. Secondly, we will introduce a (pseudo-)metric which describes how far away a pattern is from being regular. Finally we have to check whether regular parts and chaotic parts appear at the same time in given time series.

Let us start with the first question: it is doubtless that the pattern in Figure 10 shows a regular behavior (a time of growth) while the pattern in Figure 11 shows in a certain sense chaotic behavior, that is, it changes direction all the time. 
\begin{figure}[h]
\begin{minipage}[h]{0.47\textwidth}
\includegraphics[width=7cm, angle=0, trim= 0 4.5cm 0 4cm]{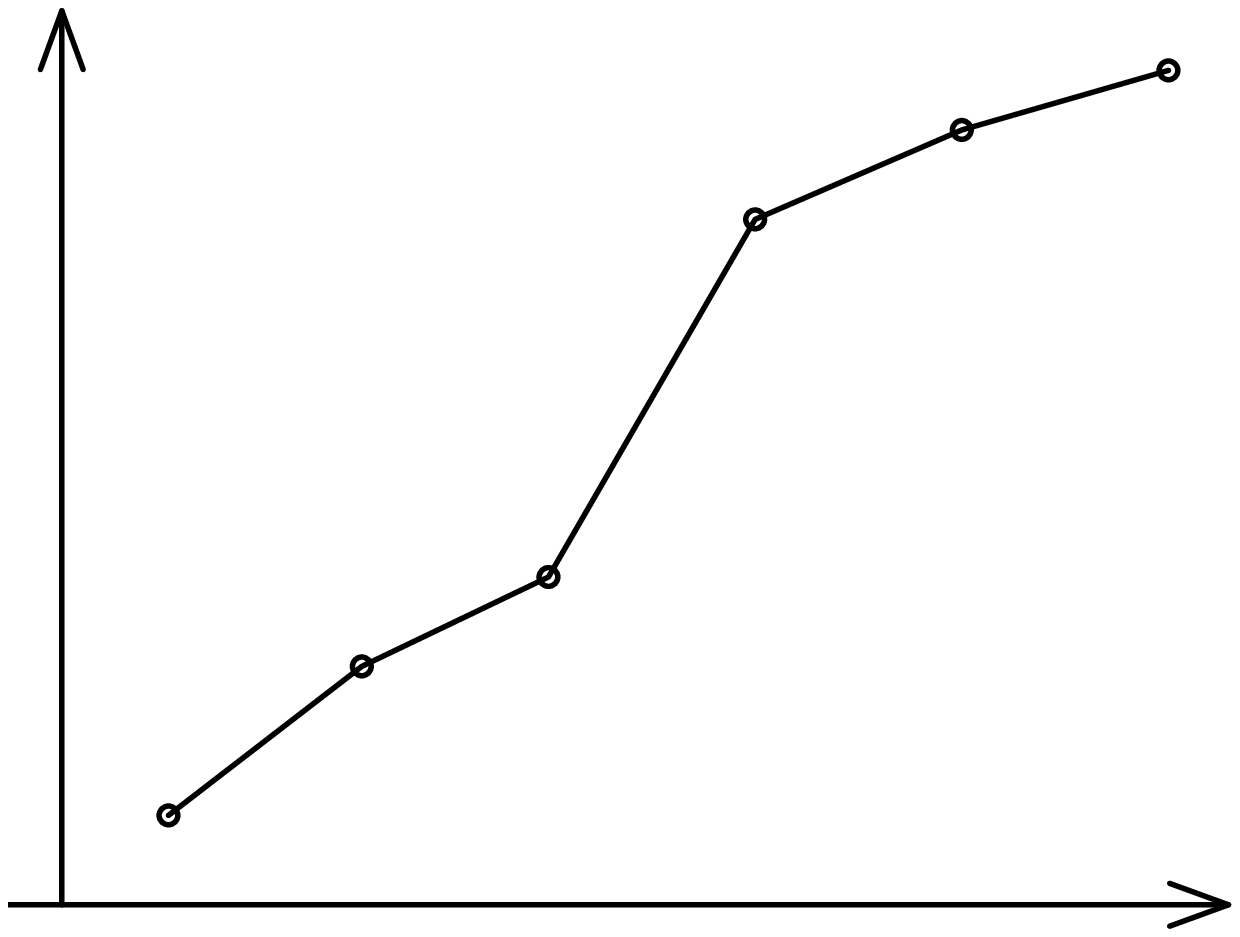}
\caption{The pattern $(5,4,3,2,1,0)$.}
\end{minipage}
\hfill
\begin{minipage}[h]{0.47\textwidth}
\includegraphics[width=7cm, angle=0, trim= 0 4.5cm 0 4cm]{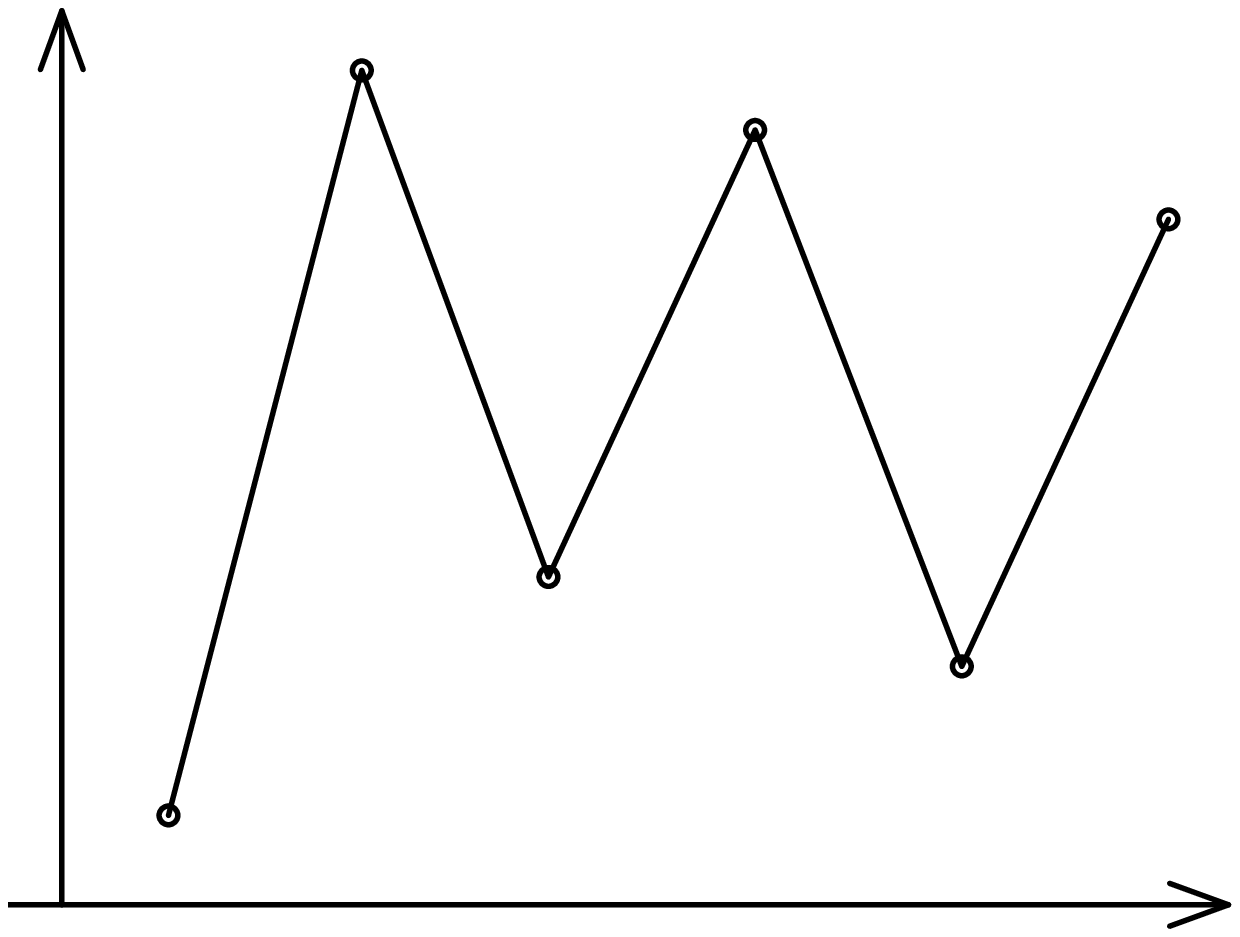}
\caption{The pattern $(1,3,5,2,4,0)$.}
\end{minipage}
\end{figure}
We want to make this mathematically tractable. The monotone patterns $(0,1,...,h)$ and $(h, h-1, ...,0)$ are from our point of view most regular. To every pattern in $S_{h+1}$ we assign the value 
\[
  \pi\mapsto c(\pi):=\min \big\{ d_{\ell^1}(\pi,(0,1,...,h)) ,  d_{\ell^1}(\pi,(h,h-1,...,0))\big\},
\]
that is, we measure how far away the pattern is from the two most regular ones. Again we use the $\ell^1$-distance, but in a different way than in our above example. 

A big value of $c(\pi)$ means that $\pi$ is `chaotic'. Chaotic patterns should be close to other chaotic patterns while regular patterns should be close to other regular ones. This is obtained in the following way: we use a metric on the space $c(S_{h+1})\subseteq \bbr_+$ which takes the order structure of $\bbr_+$ into account. W.l.o.g. we use the Euclidean distance $d_E$. Since we want to measure the distance between patterns, we pull this metric back via the function $c$:
\[
  d(\pi,\sigma):=d_E(c(\pi),c(\sigma))
\]
It is easy to check that $d$ is a pseudo-metric on $S_{h+1}$. Unlike in the case of a (proper) metric, $d(\sigma,\pi)=0$ does not imply $\sigma=\pi$. For example, the distance between the patterns $(0,1, ..., h)$ and $(h,h-1,...,0)$ is zero. The last step is identical to our previous example: we use a monotonically decreasing weight function in order to guaranty that a similar behavior results in a big AWOPD-value while a different behavior results in a small AWOPD-value. Using this method of analysis on real data is part of ongoing research. 
\end{example}

The main difference between the two examples is that different patterns are thought of as being close to each other. Our above approach hence gives us a lot of flexibility in terms of the kind of dependence which can be analyzed.  

\subsection{Limit Theorems and Structural Breaks in the Generalized Setting}

We estimate the AWOPD by the sample analogue
\[
  \hat{D}_n=\frac{1}{n} \sum_{i=1}^{n-h} w(d(\Pi(X_i,\ldots,X_{i+h}),\Pi(Y_i,\ldots,Y_{i+h})))
 -\sum_{\pi,\sigma \in S_{h+1}} w(d(\pi,\sigma))\hat{q}_X(\pi) \hat{q}_Y(\sigma),
\]
where $\hat{q}_X$ and $\hat{q}_Y$ are defined as in \eqref{eq:q_x_hat} and \eqref{eq:q_y_hat}, respectively. In order to derive the asymptotic distribution of $\hat{D}_n$, we note that $\hat{D}_n$ is a functional of the $(2\, (h+1)!+1)$-dimensional random vector 
\[
 \left(\frac{1}{n}\sum_{i=1}^{n-h}w(d(\Pi(X_i,\ldots,X_{i+h}),\Pi(Y_i,\ldots,Y_{i+h}))),
  (\hat{q}_X(\pi))_{\pi\in S_{h+1}}, (\hat{q}_Y(\pi))_{\pi\in S_{h+1}}  \right).
\]
We will now derive the asymptotic distribution of this random vector.
\begin{theorem}
Under the same assumptions as in Theorem~\ref{th:p_hat_ad}, 
\[
  \left(\frac{1}{n}\sum_{i=1}^{n-h}w(d(\Pi(X_i,\ldots,X_{i+h}),\Pi(Y_i,\ldots,Y_{i+h}))),
  (\hat{q}_X(\pi))_{\pi\in S_{h+1}}, (\hat{q}_Y(\pi))_{\pi\in S_{h+1}}\right)
\]
is asymptotically normal with mean vector
\[
  \left(E(w(d(\Pi(X_1,\ldots,X_{h+1}),\Pi(Y_1,\ldots,Y_{h+1})))), (q_X(\pi))_{\pi\in S_{h+1}},(q_Y(\pi))_{\pi\in S_{h+1}}   \right),
\]
and covariance matrix $({1}/{n})\cdot\mathbf{\Sigma}$, where
\begin{equation}
 \mathbf{\Sigma}=\left(
\begin{array}{ccc}
  a & \mathbf{b}_1^\prime & \mathbf{b}_2^\prime \\
   \mathbf{b}_1 &\mathbf{\Sigma}_{11} & \mathbf{\Sigma}_{12}\\
 \mathbf{b}_2 &\mathbf{\Sigma}_{21} &\mathbf{\Sigma}_{22}
 \end{array}
    \right).
\label{eq:awopd-sigma}
\end{equation}
Here $\mathbf{\Sigma}_{11}$, $\mathbf{\Sigma}_{12}$, and $\mathbf{\Sigma}_{22}$ are the $(h+1)!\times (h+1)!$ matrices with entries defined in \eqref{eq:sig_11}, \eqref{eq:sig_12}, and \eqref{eq:sig_22}, and $a\in \bbr$ and $\mathbf{b}_1,\mathbf{b}_2\in \bbr^{(h+1)!}$ are defined as follows
\begin{eqnarray*}
a&=& \sum_{i=-\infty}^\infty
   \Cov\big(w(d(\Pi(X_1,\ldots,X_{1+h}),\Pi(Y_1,\ldots,Y_{1+h}))), \\
   &&\qquad \qquad \qquad w(d(\Pi(X_i,\ldots,X_{i+h}),\Pi(Y_i,\ldots,Y_{i+h})))\big), \\
 \mathbf{b}_1(\pi) &=& \sum_{i=-\infty}^\infty \Cov\big(w(d(\Pi(X_1,\ldots,X_{1+h}),\Pi(Y_1,\ldots,Y_{1+h}))), 
  1_{\{\Pi(X_i,\ldots,X_{i+h})=\pi\}}\big)\\
\mathbf{b}_2(\pi) &=& \sum_{i=-\infty}^\infty \Cov\big(w(d(\Pi(X_1,\ldots,X_{1+h}),\Pi(Y_1,\ldots,Y_{1+h}))), 
  1_{\{\Pi(Y_i,\ldots,Y_{i+h})=\pi\}}\big)
\end{eqnarray*}
\label{th:awopd}
\end{theorem}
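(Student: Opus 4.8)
The plan is to establish asymptotic normality of the $(2\,(h+1)!+1)$-dimensional random vector via the multivariate central limit theorem for $1$-approximating functionals of absolutely regular processes, exactly as in the proofs of Theorem~\ref{th:p_hat_ad} and Theorem~\ref{th:q_XY}. First I would collect the component random variables into a single vector-valued sequence. For each $i$, define
\[
 \mathbf{U}_i := \Big( w\big(d(\Pi(X_i,\ldots,X_{i+h}),\Pi(Y_i,\ldots,Y_{i+h}))\big),\,
 \big(1_{\{\Pi(X_i,\ldots,X_{i+h})=\pi\}}\big)_{\pi\in S_{h+1}},\,
 \big(1_{\{\Pi(Y_i,\ldots,Y_{i+h})=\pi\}}\big)_{\pi\in S_{h+1}} \Big)^T,
\]
so that the vector in the statement is (up to the boundary terms $i>n-h$, which are asymptotically negligible) the normalized partial sum $\tfrac{1}{n}\sum_{i=1}^{n-h}\mathbf{U}_i$. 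The mean vector is immediate from stationarity, since $E\,\mathbf{U}_i$ equals the claimed vector of expectations for every $i$.

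The key structural step is to verify that each coordinate of $\mathbf{U}_i$ is a $1$-approximating functional of $(Z_i)_{i\geq 1}$ with summable approximation constants. For the indicator coordinates this is precisely the content of Lemma~\ref{le:1-approx} (invoked already in Theorem~\ref{th:p_hat_ad} and Theorem~\ref{th:q_XY}), giving approximation constants $(\sqrt{a_k})_{k\geq 1}$, which are summable by assumption~\eqref{eq:rates}. The only genuinely new coordinate is the weighted term $w(d(\Pi(X_i,\ldots,X_{i+h}),\Pi(Y_i,\ldots,Y_{i+h})))$. Since $w:d(S_{h+1},S_{h+1})\to[0,1]$ takes only finitely many values (the image of a finite set under $d$ composed with $w$), this quantity is a \emph{finite} linear combination of indicators of the events $\{\Pi(X_i,\ldots,X_{i+h})=\pi,\ \Pi(Y_i,\ldots,Y_{i+h})=\sigma\}$; indeed,
\[
 w\big(d(\Pi(X_i,\ldots,X_{i+h}),\Pi(Y_i,\ldots,Y_{i+h}))\big)
 = \sum_{\pi,\sigma\in S_{h+1}} w(d(\pi,\sigma))\,
   1_{\{\Pi(X_i,\ldots,X_{i+h})=\pi\}}\,1_{\{\Pi(Y_i,\ldots,Y_{i+h})=\sigma\}}.
\]
Each product indicator is again a joint ordinal-pattern event of the $(X_i,Y_i)$-process, and the argument of Lemma~\ref{le:1-approx} applies verbatim to show it is a $1$-approximating functional with constants of order $\sqrt{a_k}$; a finite, bounded linear combination of $1$-approximating functionals is again one, with summable constants. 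Hence every coordinate of $\mathbf{U}_i$, and therefore by the Cram\'er--Wold device every linear combination $\langle \mathbf{t},\mathbf{U}_i\rangle$, is a bounded, stationary, centered (after subtracting its mean) $1$-approximating functional of the absolutely regular process.

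With this in hand, I would apply the univariate central limit theorem of Ibragimov and Linnik (1971), Theorem~18.6.3, to each scalar sequence $\langle \mathbf{t},\mathbf{U}_i - E\mathbf{U}_i\rangle$, obtaining $\sqrt{n}\,\langle \mathbf{t}, \tfrac{1}{n}\sum_i \mathbf{U}_i - E\mathbf{U}_1\rangle \to N(0,\mathbf{t}^T\mathbf{\Sigma}\mathbf{t})$, and then conclude joint asymptotic normality by the Cram\'er--Wold device. The limiting variance of a linear combination is the sum over all lags of the corresponding covariances, and reading off the block structure identifies the entries of $\mathbf{\Sigma}$: the diagonal blocks $\mathbf{\Sigma}_{11},\mathbf{\Sigma}_{12},\mathbf{\Sigma}_{22}$ are exactly the series \eqref{eq:sig_11}--\eqref{eq:sig_22} from Theorem~\ref{th:q_XY}, while the scalar $a$ and the vectors $\mathbf{b}_1,\mathbf{b}_2$ are the long-run (co)variances of the weighted coordinate against itself and against the indicator coordinates, which are precisely the three displayed lag-sums in the statement. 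The boundary discrepancy between summing to $n-h$ and dividing by $n$ contributes a term that is $O(h/n)=o(1/\sqrt n)$ and is absorbed without affecting the limit. The main obstacle is the bookkeeping verification that the weighted coordinate inherits the $1$-approximation property, but as shown above this reduces, through the finite representation of $w\circ d$, to the already-established indicator case, so no essentially new estimate is required.
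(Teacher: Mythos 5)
Your proposal is correct and follows essentially the same route as the paper, whose proof is a one-line appeal to the multivariate CLT for functionals of mixing processes obtained from Theorem~18.6.3 of Ibragimov and Linnik (1971) via the Cram\'er--Wold device. You merely make explicit the details the paper leaves implicit --- in particular the useful observation that the weighted coordinate is a finite linear combination of joint-pattern indicators, so the $1$-approximation property follows from the argument of Lemma~\ref{le:1-approx}.
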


\begin{proof}
This follows from the multivariate  CLT for  functionals of mixing processes, which can be derived from Theorem~18.6.3 of Ibragimov and Linnik (1971) by using the Cram\'er-Wold device.
\end{proof}

 \begin{theorem}
Under the same assumptions as in Theorem~\ref{th:p_hat_ad}, 
\[
  \sqrt{n} (\hat{D}_n -AWOPD)  \rightarrow N(0,\gamma^2),
\] 
where the asymptotic variance $\gamma^2$ is given by the formula
\[
 \gamma^2 = \mathbf{\alpha}' \mathbf{\Sigma} \mathbf{\alpha},
\]
with $\mathbf{\Sigma}$ as in \eqref{eq:awopd-sigma} and 
\[
 \mathbf{\alpha}=\left(1,-\left(\sum_{\sigma \in S_{h+1}} w(d(\pi,\sigma))q_Y(\sigma)\right)_{\pi \in S_{h+1}},
-\left(\sum_{\sigma \in S_{h+1}} w(d(\sigma,\pi))q_X(\sigma)\right)_{\pi \in S_{h+1}}\right)'.
\]
\label{th:D_hat}
\end{theorem}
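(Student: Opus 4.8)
The plan is to recognize $\hat{D}_n$ as a fixed, smooth function of precisely the random vector whose joint asymptotic normality was established in Theorem~\ref{th:awopd}, and then to invoke the multivariate delta method. Write
\[
 \mathbf{U}_n=\left(\frac{1}{n}\sum_{i=1}^{n-h}w(d(\Pi(X_i,\ldots,X_{i+h}),\Pi(Y_i,\ldots,Y_{i+h}))),
  (\hat{q}_X(\pi))_{\pi\in S_{h+1}}, (\hat{q}_Y(\pi))_{\pi\in S_{h+1}}\right),
\]
and define $g:\bbr^{2(h+1)!+1}\to\bbr$ on coordinates $(u,(x_\pi)_\pi,(y_\pi)_\pi)$ by
\[
 g\big(u,(x_\pi)_\pi,(y_\pi)_\pi\big)=u-\sum_{\pi,\sigma\in S_{h+1}} w(d(\pi,\sigma))\, x_\pi\, y_\sigma.
\]
By construction $\hat{D}_n=g(\mathbf{U}_n)$, and evaluating $g$ at the mean vector $\mu=\big(E\,w(\cdots),(q_X(\pi))_\pi,(q_Y(\pi))_\pi\big)$ from Theorem~\ref{th:awopd} returns exactly $g(\mu)=AWOPD$. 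Since $g$ is a polynomial in its arguments, it is differentiable everywhere, so the regularity hypothesis of the delta method is met without further work.

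First I would compute $\nabla g(\mu)$. The derivative in the first coordinate is $1$; differentiating the bilinear term with respect to $x_\pi$ and setting $(x,y)=(q_X,q_Y)$ gives $-\sum_{\sigma\in S_{h+1}} w(d(\pi,\sigma))\,q_Y(\sigma)$, while differentiating with respect to $y_\pi$ gives $-\sum_{\sigma\in S_{h+1}} w(d(\sigma,\pi))\,q_X(\sigma)$. Collecting the three blocks shows $\nabla g(\mu)=\mathbf{\alpha}$, the vector in the statement. Applying the delta method to the conclusion $\sqrt{n}(\mathbf{U}_n-\mu)\stackrel{\mathcal{D}}{\longrightarrow}N(0,\mathbf{\Sigma})$ of Theorem~\ref{th:awopd} then yields
\[
 \sqrt{n}\big(g(\mathbf{U}_n)-g(\mu)\big)\stackrel{\mathcal{D}}{\longrightarrow} N\!\big(0,\,\nabla g(\mu)'\,\mathbf{\Sigma}\,\nabla g(\mu)\big),
\]
and substituting $g(\mathbf{U}_n)=\hat{D}_n$, $g(\mu)=AWOPD$, and $\nabla g(\mu)=\mathbf{\alpha}$ gives the claimed limit with $\gamma^2=\mathbf{\alpha}'\mathbf{\Sigma}\,\mathbf{\alpha}$.

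There is no deep obstacle here: once Theorem~\ref{th:awopd} is in hand, this is a routine delta-method reduction. The only point demanding care is the bookkeeping in the gradient, namely keeping the two index blocks distinct so that the $x$-derivatives produce $w(d(\pi,\sigma))$ (with $\pi$ the $X$-index) and the $y$-derivatives produce $w(d(\sigma,\pi))$ (with $\pi$ the $Y$-index), matching the asymmetric form of $\mathbf{\alpha}$. I would also note in passing that the gap between the $1/n$ normalization appearing in $\mathbf{U}_n$ and the $(n-h)$ summation range is asymptotically negligible after multiplication by $\sqrt{n}$, and is already accounted for in the centering used in Theorem~\ref{th:awopd}.
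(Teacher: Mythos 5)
Your proof is correct and follows exactly the paper's own argument: it applies the multivariate delta method to the same function $g(u,(x_\pi),(y_\pi))=u-\sum_{\pi,\sigma}w(d(\pi,\sigma))x_\pi y_\sigma$ evaluated at the random vector from Theorem~\ref{th:awopd}, with $\nabla g(\mu)=\mathbf{\alpha}$. The only difference is that you spell out the gradient computation and the edge-effect remark explicitly, which the paper leaves implicit.
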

\begin{proof}
This follows from Theorem~\ref{th:awopd} and the delta method, applied to the function $g:\bbr^{2(h+1)!+1}\rightarrow \bbr$, given by
\[
 g(u,(v_\pi)_{\pi\in S_{h+1}},(w_\pi)_{\pi\in S_{h+1}})
 =u-\sum_{\pi,\sigma\in S_{h+1}} w(d(\pi,\sigma)) v_\pi w_\sigma.
\]
Note that $\alpha=\nabla g(E(w(d(\Pi(X_1,\ldots,X_{h+1}),\Pi(Y_1,\ldots,Y_{h+1})))), (q_X(\pi))_{\pi\in S_{h+1}},(q_Y(\pi))_{\pi\in S_{h+1}})$.
\end{proof}

Finally, we propose a test for structural breaks in the AWOPD rejecting for large values of the test statistic
\begin{eqnarray}
W_n&=&\max_{0\leq k\leq n-k} \frac{1}{\sqrt{n}}
 \sum_{i=1}^k \Big[ w\left(d(\Pi(X_i,\ldots,X_{i+h}),\Pi(Y_i,\ldots,Y_{i+h}))\right)\nonumber \\
 &&\qquad -\frac{1}{n} \sum_{i=1}^n
w\left(d(\Pi(X_i,\ldots,X_{i+h}),\Pi(Y_i,\ldots,Y_{i+h}))\right)
 \Big]
\label{eq:awopd-sbtest}
\end{eqnarray}

\begin{theorem}
Under the same assumptions as in Theorem~\ref{th:p_hat_ad}, and under the hypothesis of no structural break, 
\[
 W_n\stackrel{\mathcal{D}}{\longrightarrow} \sqrt{a} \sup_{0\leq \lambda\leq 1}
|W(\lambda)-\lambda W(1)|,
\]
where $a$ is defined as in Theorem~\ref{th:awopd}. 
\end{theorem}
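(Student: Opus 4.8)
The plan is to recognize $W_n$ as a CUSUM statistic built from the stationary sequence
\[
\eta_i := w\big(d(\Pi(X_i,\ldots,X_{i+h}),\Pi(Y_i,\ldots,Y_{i+h}))\big), \qquad i\geq 1,
\]
and to reduce the claim to a functional limit theorem for the partial sums of $\eta_i$, exactly parallel to the proof of Theorem~\ref{th:cp_test_ad} for $T_n$. Under the hypothesis of no structural break the process $(X_i,Y_i)$ is stationary, so $E\eta_i\equiv\mu$ does not depend on $i$. Writing $\bar\eta_n:=\tfrac1n\sum_{i=1}^n\eta_i$ for the sample mean appearing in \eqref{eq:awopd-sbtest}, I would first record the algebraic identity
\[
\frac{1}{\sqrt n}\sum_{i=1}^k(\eta_i-\bar\eta_n)=\frac{1}{\sqrt n}\sum_{i=1}^k(\eta_i-\mu)-\frac{k}{n}\cdot\frac{1}{\sqrt n}\sum_{i=1}^n(\eta_i-\mu),
\]
which exhibits the CUSUM process as the partial-sum process of the centered variables $\eta_i-\mu$ minus its linear interpolation, i.e. a sample Brownian-bridge transform. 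The boundary discrepancy between summing to $n$ and to $n-h$ contributes only $O(h/\sqrt n)\to0$ and is negligible.

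Second, I would establish the functional central limit theorem
\[
\Big(\frac{1}{\sqrt n}\sum_{i=1}^{\lfloor n\lambda\rfloor}(\eta_i-\mu)\Big)_{0\leq\lambda\leq1}\stackrel{\mathcal{D}}{\longrightarrow}\big(\sqrt a\,W(\lambda)\big)_{0\leq\lambda\leq1}
\]
in the Skorokhod space $D[0,1]$, where $a$ is the long-run variance from Theorem~\ref{th:awopd}; note that $a$ is by construction precisely $\Var(\eta_1)+2\sum_{i\geq2}\Cov(\eta_1,\eta_i)$. The key input is that $\eta_i$ is a bounded $1$-approximating functional of the absolutely regular process $(Z_i)$ with summable constants. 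Since $w$ takes values in $[0,1]$, the approximation error vanishes whenever the patterns formed from the approximants $f_m(Z_{i-m},\ldots,Z_{i+m})$ coincide with the true ordinal patterns of $X$ and $Y$, so
\[
E\big|\eta_i-w(d(\Pi_m^X,\Pi_m^Y))\big|\leq P(\text{patterns differ})\leq C\sqrt{a_m},
\]
the last bound coming from Lemma~\ref{le:1-approx} (the same near-tie estimate that uses the Lipschitz continuity of the distribution functions of the increments). Hence the $1$-approximation constants of $\eta_i$ are $O(\sqrt{a_m})$, summable by \eqref{eq:rates}, and the invariance principle for $1$-approximating functionals of absolutely regular sequences applies just as in Theorem~\ref{th:cp_test_ad}.

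Third, I would combine these steps through the continuous mapping theorem. The functional $\psi\mapsto\sup_{0\leq\lambda\leq1}|\psi(\lambda)-\lambda\psi(1)|$ is continuous on $D[0,1]$ at every $\psi$ with continuous sample paths, and the limit $\sqrt a\,W$ has continuous paths almost surely. Since $W_n$ is the supremum over $\lambda=k/n$ of the absolute CUSUM process (up to the negligible boundary term), the identity from the first step together with the FCLT yields
\[
W_n\stackrel{\mathcal{D}}{\longrightarrow}\sqrt a\,\sup_{0\leq\lambda\leq1}|W(\lambda)-\lambda W(1)|,
\]
as claimed. The main obstacle is the second step, and within it the upgrade from the marginal CLT (Theorem~18.6.3 of Ibragimov and Linnik, which underlies the other theorems in this section) to the \emph{functional} limit theorem: beyond convergence of finite-dimensional distributions one needs tightness of the partial-sum process in $D[0,1]$ for $1$-approximating functionals of absolutely regular processes. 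Verifying the moment and maximal-inequality conditions that deliver this tightness under the rate assumption \eqref{eq:rates} is the genuinely technical part; by contrast the $1$-approximation bound $O(\sqrt{a_m})$ for $\eta_i$ is routine once Lemma~\ref{le:1-approx} is in hand, because $w\circ d$ is bounded and depends on the data only through the two ordinal patterns.
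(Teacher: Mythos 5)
Your proposal is correct and follows essentially the same route as the paper: the paper likewise introduces the process $W_n(\lambda)$, rewrites it as the centered partial-sum process minus its linear interpolation, and invokes the invariance principle for the (bounded, $1$-approximating) summands together with the continuous mapping theorem, exactly as in the proof of Theorem~\ref{th:cp_test_ad}. Your additional observations --- the $O(\sqrt{a_m})$ approximation bound for $w\circ d$ of the patterns and the tightness issue behind the functional limit theorem --- are details the paper leaves implicit but do not constitute a different argument.
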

\begin{proof}
The proof follows along the lines of the proof of Theorem~\ref{th:cp_test_ad}. We introduce the process
\begin{eqnarray*}
 W_n(\lambda)&=&\frac{1}{\sqrt{n}}
 \sum_{i=1}^{[n\lambda]} \Big[ w\left(d(\Pi(X_i,\ldots,X_{i+h}),\Pi(Y_i,\ldots,Y_{i+h}))\right) \\
 &&\qquad -\frac{1}{n}\sum_{i=1}^n
w\left(d(\Pi(X_i,\ldots,X_{i+h}),\Pi(Y_i,\ldots,Y_{i+h}))\right)
 \Big]\\
&=&\frac{1}{\sqrt{n}} \sum_{i=1}^{[n\lambda]}
w\left(d(\Pi(X_i,\ldots,X_{i+h}),\Pi(Y_i,\ldots,Y_{i+h}))\right) \\
&&\qquad 
-\frac{[n\lambda]}{n}\frac{1}{\sqrt{n}} \sum_{i=1}^n w\left(d(\Pi(X_i,\ldots,X_{i+h}),\Pi(Y_i,\ldots,Y_{i+h}))\right).
\end{eqnarray*}
As in the proof of Theorem~\ref{th:cp_test_ad}, we can show that $(W_n(\lambda))_{0\leq \lambda \leq 1}$ converges in distribution to the process $\sqrt{a}(W(\lambda)-\lambda W(1))_{0\leq \lambda \leq 1}$.
\end{proof}
\section{Proofs}

Let us first show that the time series in Example \ref{ex:oneapprox} are 1-approximating functionals: In part (i) we have considered a $MA(\infty)$-time series $X$. We define the functions
$f_m:\bbr^{2m+1}\rightarrow \bbr$ by
$ f_m (z_{i-m},\ldots,z_{i+m})=\sum_{j=0}^m \alpha_j z_{i-j}$.
Thus we obtain
\begin{eqnarray*}
 E|X_i-f_m(Z_{i-m},\ldots,Z_{i+m})| &=& E\left|\sum_{j=m+1}^\infty \alpha_j Z_{i-j}\right| \\
&\leq& \left(  E\Big(\sum_{j=m+1}^\infty \alpha_j Z_{i-j}^2  \Big) \right)^{1/2} 
= \left(\sum_{j=m+1}^\infty a_j^2  \right)^{1/2} \sqrt{\Var(Z_1)}.
\end{eqnarray*}
Hence, $(X_i)_{i\geq 1}$ is a $1$-approximating functional with coefficients 
$a_m=\left(\sum_{j=m+1}^\infty \alpha_j^2  \right)^{1/2} $. 

In part (ii) we have considered the baker's map. We will now show that $(X_n)_{n\geq 0}$ is a functional of an i.i.d. process. It is well-known that for almost every $\omega\in [0,1]$, there is a unique dyadic expansion 
\[
 \omega=\sum_{j=1}^\infty \frac{Z_j}{2^j},
\]
where $Z_j=Z_j(\omega)\in \{0,1\}$. Moreover, the random variables $(Z_j)_{j\geq 1}$ are i.i.d. and $P(Z_i=0)=P(Z_i=1)=1/2$. Note that 
\[
  T^n(\omega)=\sum_{j=1}^\infty \frac{Z_{j+n}}{2^j}=\sum_{j=-\infty}^{-1} 2^j Z_{n-j}.
\]
We then define $f_m(z_{-m},\ldots,z_m)=g(\sum_{j=-m}^{-1}2^j z_j  )$, and thus we obtain
\begin{eqnarray*}
  E|X_0 -f_m(Z_{-m},\ldots,Z_m)|
 &=& E\left|g\left( \sum_{j=1}^\infty Z_{j}{2^j}\right) -g\left(\sum_{j=1}^{m}\frac{Z_j}{2^j} \right) \right| \\
 &\leq & \| g\|_L E\left(\sum_{j=m+1}^\infty \frac{Z_j}{2^j}\right)=\|g\|_L \frac{1}{2^{m+1}}.
\end{eqnarray*}
Hence, $(X_n)_{n\geq 0}$ is a $1$-approximating functional of the i.i.d. process $(Z_j)_{j\in \bbz}$ with approximating constants $a_m={\|g\|_L}/{2^{m+1}}$.
The proof for the claim of part (iii) is similar and hence omitted. 

The following lemma is used in the proof of Theorem \ref{th:p_hat_ad}.
\begin{lemma}\label{lem:oneapprox}
Let $(X_i,Y_i)_{i\geq 1}$ be a $1$-approximating functional of the  process $(Z_i)_{i\in \bbz}$ with approximating coefficients $(a_m)_{m\geq 1}$.\\[1mm]
(i) Assume that the distribution functions of $X_i-X_1$ is Lipschitz-continuous, for any $i\in \{1,\ldots,h+1\}$. Then, for any permutation $\pi$,
\[
 1_{\{\Pi(X_i,X_{i+1},\ldots X_{i+h})= \pi \}}
\]
is a $1$-approximating functional with approximation constants $(O(\sqrt{a_m}))_{m\geq 1}$.
\\[1mm]
(ii)
Assume that the distribution functions of $X_i-X_1$, and of $Y_i-Y_1$, are Lipschitz-continuous, for any $i\in \{1,\ldots,h+1\}$. Then, 
\[
 1_{\{\Pi(X_i,X_{i+1},\ldots X_{i+h})= \Pi(Y_i,Y_{i+1},\ldots Y_{i+h})\}}
\]
is a $1$-approximating functional with approximation constants $(O(\sqrt{a_m}))_{m\geq 1}$.
\label{le:1-approx}
\end{lemma}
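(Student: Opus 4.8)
The plan is to exploit the fact that the ordinal pattern $\Pi(X_i,\ldots,X_{i+h})$ is completely determined by the signs of the $\binom{h+1}{2}$ pairwise differences $X_{i+k}-X_{i+l}$, $0\le k<l\le h$, so that the indicator in (i) is a deterministic function of these comparisons. Since $(X_i,Y_i)_{i\ge 1}$ is a $1$-approximating functional, each coordinate $X_{i+k}$ can be replaced by the $Z$-measurable approximation obtained from the first component of the approximating map, and I would simply define the approximating indicator to be the pattern indicator evaluated at these approximations. The genuine difficulty is that the indicator is \emph{discontinuous}: closeness of $X_{i+k}$ to its approximation does not by itself force the two patterns to agree, because the pattern can flip whenever some difference $X_{i+k}-X_{i+l}$ sits near zero. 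Controlling this near-tie event is the crux of the argument, and it is exactly what forces the loss of a square root and hence the rate $O(\sqrt{a_m})$.

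Concretely, writing $\tilde X_{i+k}=f^{(1)}_{m}(Z_{i+k-m},\ldots,Z_{i+k+m})$ for the first component of the approximating function, the approximating indicator $1_{\{\Pi(\tilde X_i,\ldots,\tilde X_{i+h})=\pi\}}$ depends only on $Z_{i-m},\ldots,Z_{i+h+m}$; since $h$ is fixed this sits inside a symmetric window of radius $m+h$ about $i$, and the shift of the index by the constant $h$ affects neither the order of the rate nor the summability that Theorem~\ref{th:p_hat_ad} requires. Because the two indicators are $\{0,1\}$-valued, their $L^1$-distance equals $P\big(\{\Pi(X)=\pi\}\triangle\{\Pi(\tilde X)=\pi\}\big)$ (abbreviating $\Pi(X_i,\ldots,X_{i+h})$ by $\Pi(X)$), an event contained in $\{\Pi(X)\neq\Pi(\tilde X)\}$, which in turn is contained, up to ties that have probability zero since the relevant distribution functions are continuous, in the union over pairs $k<l$ of the sign-mismatch events. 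Setting $\delta_{kl}=(X_{i+k}-\tilde X_{i+k})-(X_{i+l}-\tilde X_{i+l})$, a one-line case check shows that a sign mismatch for the pair $(k,l)$ forces $\abs{X_{i+k}-X_{i+l}}\le\abs{\delta_{kl}}$.

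The next step is the standard thresholding: for any $\varepsilon>0$,
\[
 P\big(\abs{X_{i+k}-X_{i+l}}\le\abs{\delta_{kl}}\big)
 \le P\big(\abs{X_{i+k}-X_{i+l}}\le\varepsilon\big)+P\big(\abs{\delta_{kl}}>\varepsilon\big).
\]
For the first term I would invoke the hypothesis: by stationarity $X_{i+k}-X_{i+l}$ has the same law as $X_{1+(k-l)}-X_1$, whose distribution function is Lipschitz, so this probability is at most $2L\varepsilon$, with $L$ the maximal Lipschitz constant over the finitely many relevant laws. For the second term, Markov's inequality together with $E\abs{\delta_{kl}}\le E\abs{X_{i+k}-\tilde X_{i+k}}+E\abs{X_{i+l}-\tilde X_{i+l}}\le 2a_m$ gives the bound $2a_m/\varepsilon$. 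Summing over the $\binom{h+1}{2}$ pairs and optimizing by choosing $\varepsilon=\sqrt{a_m}$ yields $E\,\big|1_{\{\Pi(X)=\pi\}}-1_{\{\Pi(\tilde X)=\pi\}}\big|=O(\sqrt{a_m})$; stationarity makes every bound uniform in $i$, which is what the definition demands. This proves (i).

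For (ii) I would either repeat the same argument verbatim, now allowing a sign flip in either an $X$-difference or a $Y$-difference and using Lipschitz continuity of both families of marginal differences, or, more economically, write
\[
 1_{\{\Pi(X_i,\ldots,X_{i+h})=\Pi(Y_i,\ldots,Y_{i+h})\}}
 =\sum_{\pi\in S_{h+1}}1_{\{\Pi(X_i,\ldots,X_{i+h})=\pi\}}\,1_{\{\Pi(Y_i,\ldots,Y_{i+h})=\pi\}}
\]
and observe that, since the approximating indicators built in (i) are again $\{0,1\}$-valued and use a common window, products and finite sums of uniformly bounded $1$-approximating functionals are again $1$-approximating with constants of the same order. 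Either route delivers the constants $O(\sqrt{a_m})$, and as already noted the main obstacle throughout is the discontinuity of the pattern indicator, resolved by the Lipschitz small-ball estimate whose price is precisely the square root in the rate.
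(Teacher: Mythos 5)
Your proposal is correct and follows essentially the same route as the paper's proof: approximate the coordinates by the $Z$-measurable functions $f_m$, bound the $L^1$-distance of the two pattern indicators by the probability that either some coordinate deviates from its approximation by more than $\varepsilon$ (controlled by Markov's inequality and the $1$-approximation constants) or some pairwise difference lies within $\varepsilon$ of a tie (controlled by the Lipschitz continuity of the laws of the increments), and then optimize with $\varepsilon=\sqrt{a_m}$. The only cosmetic difference is that you isolate the sign-mismatch event per pair before thresholding, while the paper bounds the indicator difference directly by the sum of the corresponding indicator variables; the estimates and the resulting rate $O(\sqrt{a_m})$ are identical.
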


\begin{proof} We only present the proof of part (ii). The proof of part (i) follows the same lines.
Let $m\geq 1$ and define $(X_i^{(m)},Y_i^{(m)})=f_m(Z_{i-m},\ldots,Z_{i+m})$. Then, 
the following inequality holds
\begin{eqnarray*}
 &&1_{\{\Pi(X_i,X_{i+1},\ldots X_{i+h})= \Pi(Y_i,Y_{i+1},\ldots Y_{i+h})\}}
-1_{\{\Pi(X_i^{(m)},X_{i+1}^{(m)},\ldots X_{i+h}^{(m)})= \Pi(Y_i^{(m)},Y_{i+1}^{(m)},\ldots Y_{i+h}^{(m)})\}}\\
&& \quad \leq \sum_{j=0}^h 1_{\{|X_{i+j}-X_{i+j}^{(m)}|>\epsilon\}} 
+ \sum_{j=0}^h 1_{\{|Y_{i+j}-Y_{i+j}^{(m)}|>\epsilon\}} \\
&& \qquad+ \sum_{0\leq j \neq k\leq h} 1_{\{|X_{i+j}-X_{i+k}|\leq 2\epsilon \}}
+ \sum_{0\leq j \neq k\leq h} 1_{\{|Y_{i+j}-Y_{i+k}|\leq 2\epsilon \}}.
\end{eqnarray*}
Thus, by stationarity, we obtain 
\begin{eqnarray*}
&& E \left| 1_{\{\Pi(X_i,X_{i+1},\ldots X_{i+h})= \Pi(Y_i,Y_{i+1},\ldots Y_{i+h})\}}
-1_{\{\Pi(X_i^{(m)},X_{i+1}^{(m)},\ldots X_{i+h}^{(m)})= \Pi(Y_i^{(m)},Y_{i+1}^{(m)},\ldots Y_{i+h}^{(m)})\}}
\right|\\
&& \quad \leq (h+1) P(|X_1-X_1^{(m)}|\geq \epsilon) +(h+1) P(|Y_1-Y_1^{(m)}| \geq \epsilon)\\
&& \qquad +\sum_{1\leq j\neq k\leq h} P(|X_j-X_k| \leq 2\epsilon)
+\sum_{1\leq j\neq k\leq h} P(|Y_j-Y_k| \leq 2\epsilon)\\
&&\quad \leq 2(h+1) \frac{a_m}{\epsilon}+2(h+1)h C\epsilon.
\end{eqnarray*}
Choosing $\epsilon=\sqrt{a_m}$, we thus obtain
\[
 E \left| 1_{\{\Pi(X_i,X_{i+1},\ldots X_{i+h})= \Pi(Y_i,Y_{i+1},\ldots Y_{i+h})\}}
-1_{\{\Pi(X_i^{(m)},X_{i+1}^{(m)},\ldots X_{i+h}^{(m)})= \Pi(Y_i^{(m)},Y_{i+1}^{(m)},\ldots Y_{i+h}^{(m)})\}}
\right| \leq C\sqrt{a_m},
\]
thus showing that $1_{\{\Pi(X_i,X_{i+1},\ldots X_{i+h})= \Pi(Y_i,Y_{i+1},\ldots Y_{i+h})\}}$ is a $1$-approximating functional.
\end{proof}

\begin{proof}[Proof of Theorem \ref{th:q_hat}]
We define the function $f:\bbr^{2h!}\rightarrow \bbr$ by $ f(\mathbf{x},\mathbf{y})=\sum_{i=1}^{h!} x_i\, y_i $,
where $\mathbf{x}=(x_1,\ldots,x_{h!})$, $\mathbf{y}=(y_1,\ldots,y_{h!})$. $f$ is everywhere differentiable, with partial derivatives
$\frac{\partial}{\partial x_i} f(x,y)=y_i$ and $\frac{\partial}{\partial y_i} f(x,y)=x_i$. Thus, denoting by
$\nabla_x f$ the vector of partial derivatives of $f$ with respect to the $x$-coordinates,  
we obtain 
\[
 \nabla f(x,y)=\left(
 \begin{array}{c}
  \nabla_x f \\
  \nabla_y f
  \end{array}
 \right) (\mathbf{x},\mathbf{y}) 
 =\left(
\begin{array}{c}
   \mathbf{y} \\
   \mathbf{x}
  \end{array}
 \right). 
\]
Observe that $\hat{q}_n = f((\hat{q}_X(\pi))_{\pi \in S_h}, (\hat{q}_Y(\pi))_{\pi \in S_h})$, and that
$q=f(q_X,q_Y)$. Hence, we may apply the delta method, which yields $\sqrt{n} (\hat{p}_n - p)\rightarrow N(0,\gamma^2)$, where
\begin{eqnarray*}
 \gamma^2&=&(\nabla f(q_X,q_Y))^T \mathbf{\Sigma} \nabla f(q_X,q_Y) \\
&=&  (q_Y^T, q_X^T) 
\left(  
\begin{array}{cc}
\mathbf{\Sigma}_{11} & \mathbf{\Sigma}_{12} \\
\mathbf{\Sigma}_{12} & \mathbf{\Sigma}_{22}
\end{array}
\right)
\left(
\begin{array}{c}
   q_Y \\
   q_X
  \end{array}
 \right) \\
&=& q_Y^T \mathbf{\Sigma}_{11} q_Y +2 q_X^T \mathbf{\Sigma}_{12} q_Y + q_X^T \mathbf{\Sigma}_{22} q_X.
\end{eqnarray*}
Using \eqref{eq:sig_11}, \eqref{eq:sig_12}, and \eqref{eq:sig_22}, we then obtain the final formula for $\gamma^2$.
\end{proof}

\begin{proof}[Proof of Theorem \ref{th:cp_test_ad}]
We introduce the process 
\[
 T_n(\lambda)=\frac{1}{\sqrt{n}} \sum_{i=1}^{[n\lambda]} \left(1_{\{\Pi(X_i,\ldots,X_{i+h})=
\Pi(Y_i,\ldots,Y_{i+h})  \}} -\hat{p}_n\right),
\] 
which we can rewrite as follows
\begin{eqnarray*}
 T_n(\lambda)
&=& \frac{1}{\sqrt{n}} \sum_{i=1}^{[n\lambda]} \left(1_{\{\Pi(X_i,\ldots,X_{i+h})=
\Pi(Y_i,\ldots,Y_{i+h})  \}} -p\right) -\frac{[n\lambda]}{\sqrt{n}} \left(\hat{p}_n-p)\right)\\
&=& \frac{1}{\sqrt{n}} \sum_{i=1}^{[n\lambda]} \left(1_{\{\Pi(X_i,\ldots,X_{i+h})=
\Pi(Y_i,\ldots,Y_{i+h})  \}} -p\right) \\
&& \qquad -\frac{[n\, \lambda]}{n} \frac{1}{\sqrt{n}}\sum_{i=1}^{n} \left(1_{\{\Pi(X_i,\ldots,X_{i+h})=
\Pi(Y_i,\ldots,Y_{i+h})  \}} -p\right)\\
&&\qquad -\frac{[n\, \lambda]}{n^{3/2}}\sum_{i=n-h+1}^n1_{\{\Pi(X_i,\ldots,X_{i+h})=
\Pi(Y_i,\ldots,Y_{i+h})  \}} .
\end{eqnarray*}
Note that $({[n\, \lambda]}/{n^{3/2}})\sum_{i=n-h+1}^n1_{\{\Pi(X_i,\ldots,X_{i+h})= \Pi(Y_i,\ldots,Y_{i+h})} $ converges to zero in probability, and that ${[n\lambda]}/{n}$ converges to $\lambda$. Thus,
by the invariance principle for the partial sums of the indicator variables $1_{\{\Pi(X_i,\ldots,X_{i+h})=
\Pi(Y_i,\ldots,Y_{i+h})  \}}$,  we obtain immediately that the term $({1}/{\sqrt{n}})\sum_{i=1}^{[n\lambda]} (1_{\{\Pi(X_i,\ldots,X_{i+h})=\Pi(Y_i,\ldots,Y_{i+h})  \}}-p)$ converges in distribution to a Brownian motion with variance $\sigma^2$. Thus, by the continuous mapping theorem, we obtain convergence of $(T_n(\lambda))_{0\leq \lambda \leq 1}$ towards $\sigma(W(\lambda)-\lambda W(1))$. 
Another application of the continuous mapping theorem yields that 
\[
 \sup_{0\leq \lambda \leq 1} |T_n(\lambda)| \rightarrow \sigma \sup_{0\leq \lambda \leq 1}
  |W(\lambda)-\lambda W(1)|.
\]
Finally, we observe that $T_n=\sup_{0\leq \lambda \leq 1-\frac{h}{n}} |T_n(\lambda)|$, and that 
\begin{eqnarray*}
 \left|\sup_{0\leq \lambda \leq 1}|T_n(\lambda)| -\sup_{0\leq \lambda \leq 1-\frac{h}{n}}|T_n(\lambda)|\right| 
&\leq & \frac{1}{\sqrt{n}} \sum_{i=n-h+1}^n \left| 1_{\{\Pi(X_i,\ldots,X_{i+h})=\Pi(Y_i,\ldots,Y_{i+h})\}} -\hat{p}_n  \right|.
\end{eqnarray*}
As the right hand side converges to zero in probability, we have finally proved that $T_n$ converges in distribution to $\sigma\sup_{0\leq \lambda \leq 1}|W(\lambda)-\lambda W(1)|$.
\end{proof}

\section{Conclusion}

In the present paper we have introduced a new method to detect structural breaks in the dependence between two time series. To this end we have used the concept of ordinal pattern dependence which has been introduced in Schnurr (2014). While that article contained mainly a case study, here, we have presented the technical framework and generalized the concept substantially by using distance functions on the space of ordinal patterns. This allows us to analyze various kinds of dependence in future research.  

Our approach has several advantages compared to other ways of analyzing structural breaks within the dependence: the method is robust against measurement errors or small perturbations of the data. The intuition behind the concept is clear and there are quick algorithms in order to carry out the analysis. Let us emphasize that we do not need our random variables $X_i$ to have second moments which is a standard assumption for all tests which are based on correlation. 

It is important to note that even the classical ordinal pattern dependence does not measure the same phenomena as correlation measures. It is not in the scope of the present paper, but let us mention that we have analyzed data from medicine as well as hydrology which admit an ord$\oplus$ without showing a significant positive correlation and those with a strong positive correlation without showing ordinal pattern dependence. This statement remains true, comparing the ord$\oplus$ with kendall's tau or spearman's rho. 

Dealing with financial data we have seen that our test works in practice.  One could have other applications in mind. Since the method is scale free one can compare data coming from entirely different sources. As an example one could analyze the dependence between asset data and the heart rate of a trader. 

\bibliographystyle{Chicago}

\end{document}